\providecommand{\U}[1]{\protect\rule{.1in}{.1in}}
\newtheorem{theorem}{Theorem}
\newtheorem*{theorem*}{Theorem}
\theoremstyle{plain}
\newtheorem{corollary}{Corollary}
\newtheorem{example}{Example}
\numberwithin{equation}{section}
\newtheorem*{ClassInc}{Linear Inclusion Theorem}
\newtheorem*{IncNorm}{Inclusion on $\ell_p$ spaces}
\newtheorem*{Mink}{Minkowski's inequality}
\begin{document}
\title[Anisotropic Regularity Principle in sequence spaces and applications]{Anisotropic Regularity Principle in sequence spaces and applications}
\author[Albuquerque]{Nacib Albuquerque}
\address[N. Albuquerque]{Departamento de Matem\'{a}tica, Universidade Federal da
Para\'{\i}ba, 58.051-900, Jo\~{a}o Pessoa -- PB, Brazil.}
\email{ngalbqrq@gmail.com and ngalbuquerque@pq.cnpq.br}
\author[Rezende]{Lisiane Rezende}
\address[L. Rezende]{Departamento de Matem\'{a}tica, Universidade Federal da
Para\'{\i}ba, 58.051-900, Jo\~{a}o Pessoa -- PB, Brazil.}
\email{lirezendestos@gmail.com}
\subjclass[2010]{46G25, 47H60}
\keywords{Inclusion Theorem, Multilinear operators, Regularity Principle}

\begin{abstract}
We refine a recent technique introduced by Pellegrino, Santos, Serrano and
Teixeira and prove a quite general anisotropic regularity principle in sequence spaces. As applications we generalize previous results of several authors regarding Hardy--Littlewood inequalities for multilinear forms. 

\end{abstract}
\maketitle
\tableofcontents

\section{Introduction}

Regularity techniques are crucial in many fields of pure and applied sciences. Recently, Pellegrino, Teixeira, Santos and Serrano addressed a regularity problem in sequence spaces with deep connections with the Hardy--Littlewood inequalities. In this paper we follow this vein, exploring the ideas from the Regularity Principle proven by Pellegrino \textit{et al.} \cite{PSST} and providing a couple of applications. 

The paper is organized as follows. In Section 2, borrowing ideas from \cite{PSST} we prove an anisotropic inclusion theorem for summing operators that will be useful along the paper. The techniques and arguments explored in Section 2 paves the way to the statement of a kind of anisotropic regularity principle for sequence spaces/series, in Section 3, completing results from \cite{PSST}. In the final section the bulk of results are combined to prove new Hardy--Littlewood inequalities for multilinear operators. 

\subsection{Summability of multilinear operators}

The theory of multiple summing multilinear
mappings was introduced in \cite{matos03, pg}; this class is certainly one of the most useful and fruitful multilinear generalizations
of the concept of absolutely summing linear operators, with important
connections with the Bohnenblust--Hille and Hardy-Littlewood inequalities and its applications in applied sciences. For recent results on absolutely summing
operators and these classical inequalities we refer to
\cite{blasco,cav.quaest,maia} and the references therein.

The Hardy--Littlewood inequalities have its starting point in 1930 with
Littlewood's $4/3$ inequality \cite{l}. In 1934 Hardy and Littlewood extended
Littlewood's $4/3$ inequality \cite{hardy} to more general sequence spaces.
Both results are for bilinear forms. In 1981 Praciano-Pereira \cite{pra} extended these
results to the $m$-linear setting and recently various authors have revisited
this subject. In 2016 Dimant and Sevilla-Peris \cite{dimant} proved the
following inequality: for all positive integers $m$ and all $m<p\leq2m$ we have%

\begin{equation}
\left(  \sum_{j_{1},\cdots,j_{m}=1}^{\infty}\left\vert T(e_{j_{1}}%
,\cdots,e_{j_{m}})\right\vert ^{\frac{p}{p-m}}\right)  ^{\frac{p-m}{p}}%
\leq\left(  \sqrt{2}\right)  ^{m-1}\left\Vert T\right\Vert \label{m88}%
\end{equation}
for all continuous $m$-linear forms $T:\ell_{p}\times\cdots\times\ell
_{p}\rightarrow\mathbb{K}$. It is also proved that the exponent $\frac{p}%
{p-m}$ cannot be improved. However, this optimality seems to be just apparent, as remarked in
some previous works (see \cite{APNSHL, cav.quaest}). Following these lines,
the exponent can be potentially improved in the anisotropic viewpoint. In
order to do that, the theory of summing operators shall play a fundamental role.

Along the years, somewhat puzzling inclusion results for multilinear summing
operators were obtained \cite{pams,complu,pg}. In this note we prove an
inclusion result for multiple summing operators generalizing recent approaches
of Pellegrino, Santos, Serrano and Teixeira \cite{PSST} and Bayart
\cite{bayart}. It is interesting to note that, \textit{en passant}, a sharper
version of the Hardy--Littlewood inequalities for $m$-linear forms, not
encompassed by several recent attempts (for instance, \cite{APNSHL}), is
provided as application. In fact, we show that under the same hypothesis of (\ref{m88}) we
have%
\begin{equation}
\left(  \sum_{j_{1}=1}^{\infty}\left(  \ldots\left(  \sum_{j_{m}=1}^{\infty
}\left\Vert T\left(  e_{j_{1}},\dots,e_{j_{m}}\right)  \right\Vert ^{s_{m}%
}\right)  ^{\frac{s_{m-1}}{s_{m}}}\dots\right)  ^{\frac{s_{1}}{s_{2}}}\right)
^{\frac{1}{s_{1}}}\leq\left(  \sqrt{2}\right)  ^{m-1}\Vert T\Vert\label{87654}%
\end{equation}
with
\[
s_{1}=\frac{p}{p-m},
\dots,s_{m}=\frac{2mp}{mp+p-2m},
\]
which is quite better than (\ref{m88}). Despite the huge advance recently
obtained on this direction in \cite{APNSHL}, our results and techniques were
not encompassed by its techniques.

Throughout this paper $X,Y$ shall stand for Banach spaces over the scalar field
$\mathbb{K}$ of real or complex numbers. The topological dual of $X$ and its
unit closed ball are denoted by $X^{\prime}$ and $B_{X^{\prime}}$,
respectively. For $r,p\geq1$, a linear operator $T:X\rightarrow Y$ is said
$(r;p)$-summing if there exists a constant $C>0$ such that
\[
\left(  \sum_{j=1}^{\infty}\left\Vert T(x_{j})\right\Vert ^{r}\right)
^{\frac{1}{r}}\leq C\left\Vert (x_{j})_{j\in\mathbb{N}}\right\Vert _{w,p}%
\]
for any \emph{weakly $p$-summable} vector sequence $(x_{j})_{j\in\mathbb{N}%
}\in\ell_{p}^{w}(X)$, where
\[
\ell_{p}^{w}(X):=\left\{  (x_{j})_{j\in\mathbb{N}}\in X^{\mathbb{N}%
}:\,\left\Vert (x_{j})_{j\in\mathbb{N}}\right\Vert _{w,p}:=\sup_{\varphi\in
B_{X^{\prime}}}\left(  \sum_{j=1}^{\infty}\left\vert \varphi(x_{j})\right\vert
^{p}\right)  ^{\frac{1}{p}}<\infty\right\}  .
\]
For $\mathbf{p}:=(p_{1},\dots,p_{m})\in\lbrack1,\infty]^{m}$, using the
definitions of mixed norm $L_{\mathbf{p}}$ spaces from \cite{benedek}, the
mixed norm sequence space
\[
\ell_{\mathbf{p}}(X):=\ell_{p_{1}}\left(  \ell_{p_{2}}\left(  \cdots\left(
\ell_{p_{m}}\left(  X\right)  \right)  \cdots\right)  \right)
\]
gathers all multi-index vector valued matrices $\mathbf{x}:=\left(
x_{\mathbf{j}}\right)  _{\mathbf{j}\in\mathbb{N}^{m}}$ with finite
$\mathbf{p}$-norm; here $\mathbf{j}:=(j_{1},\dots,j_{m})$ stands for a
multi-index as usual. Notice that each norm $\Vert\cdot\Vert_{p_{k}}$ is taken
over the index $j_{k}$ and that each index $j_{k}$ is related to the
$\Vert\cdot\Vert_{p_{k}}$ norm. For instance, when ${\mathbf{p}}\in
\lbrack1,\infty)^{m}$ a vector matrix $\mathbf{x}$ belongs to $\ell
_{\mathbf{p}}(X)$ if, and only if,
\[
\left\Vert {\mathbf{x}}\right\Vert _{\mathbf{p}}:=\left(  \sum_{j_{1}%
=1}^{\infty}\left(  \sum_{j_{2}=1}^{\infty}\left(  \cdots\left(  \sum
_{j_{m-1}=1}^{\infty}\left(  \sum_{j_{m}=1}^{\infty}\left\Vert x_{\mathbf{j}%
}\right\Vert ^{p_{m}}\right)  ^{\frac{p_{m-1}}{p_{m}}}\right)  ^{\frac
{p_{m-2}}{p_{m-1}}}\dots\right)  ^{\frac{p_{2}}{p_{3}}}\right)  ^{\frac{p_{1}%
}{p_{2}}}\right)  ^{\frac{1}{p_{1}}}<\infty.
\]
Over the last years, many different generalizations of the theory of
absolutely and multiple summing operators were obtained. A natural anisotropic
approach to multiple summing operators is the following: For $\mathbf{r}%
,\mathbf{p}\in\lbrack1,+\infty)^{m}$, a multilinear operator $T:X_{1}%
\times\cdots\times X_{m}\longrightarrow Y$ is said to be multiple
$(\mathbf{r},\mathbf{p})$-summing if there exists a constant $C>0$ such that
for all sequences $x^{k}:=(x_{j}^{k})_{j\in\mathbb{N}}\in\ell_{p_{k}}%
^{w}(X_{k}),\,k=1,\dots,m$,
\[
\left\Vert \left(  T(x_{\mathbf{j}})\right)  _{\mathbf{j}\in\mathbb{N}^{m}%
}\right\Vert _{\mathbf{r}}:=\left(  \sum_{j_{1}=1}^{\infty}\left(
\ldots\left(  \sum_{j_{m}=1}^{\infty}\left\Vert T\left(  x_{\mathbf{j}%
}\right)  \right\Vert ^{r_{m}}\right)  ^{\frac{r_{m-1}}{r_{m}}}\dots\right)
^{\frac{r_{1}}{r_{2}}}\right)  ^{\frac{1}{r_{1}}}\leq C\prod_{k=1}^{m}\Vert
x^{k}\Vert_{w,p_{k}},
\]
where $T(x_{\mathbf{j}}):=T\left(  x_{j_{1}}^{1},\dots,x_{j_{m}}^{m}\right)
$. The class of all multiple $(\mathbf{r},\mathbf{p})$-summing operators is a
Banach space with the norm defined by the infimum of all previous constants
$C>0$. This norm is denoted by $\pi_{(\mathbf{r},\mathbf{p})}(\cdot)$ and the
space that gathers all such operators by $\Pi_{(\mathbf{r};\mathbf{p})}%
^{m}(X_{1},\ldots,X_{m};Y)$. When $r_{1}=\cdots=r_{m}=r,\,s_{1}=\cdots
=s_{m}=s$ we simply write $(r;\mathbf{p}),\,(\mathbf{r};s)$, respectively.




\subsection{Inclusion Theorems}

Basic results from the theory of summing operators are inclusion theorems. For
linear operators, it is folklore that $p$-summability implies $q$-summability
whenever $1\leq p\leq q$. More generally, although basic, the following is
quite useful (see \cite{DJT}).

\begin{ClassInc}
If $s \geq r,\, q \geq p$ and $\frac{1}{p} - \frac{1}{r} \leq\frac{1}{q} -
\frac{1}{s}$, then every absolutely $(r;p)$-summing linear operator is
absolutely $(s;q)$-summing
\end{ClassInc}

Throughout the development of the theory, inclusion theorems reveals as
challenging problems (see, for instance, \cite{pg}). In \cite[Proposition
3.4]{PSST}, the authors proved the followimg  multilinear inclusion result:

\begin{theorem}
[Pellegrino, Santos, Serrano and Teixeira]\label{Pellegrino} Let $m$ be a
positive integer, $r,p,q\in\lbrack1,+\infty)$ be such that $q\geq p$ and
\[
\frac{1}{r}-\frac{m}{p}+\frac{m}{q}>0.
\]
Then
\[
\Pi_{(r;p)}^{m}\left(  X_{1},\dots,X_{m};Y\right)  \subset\Pi_{(s;q)}%
^{m}\left(  X_{1},\dots,X_{m};Y \right)  ,
\]
for any Banach spaces $X_{1},\dots,X_{m}$, with
\[
\frac{1}{s}-\frac{m}{q}=\frac{1}{r}-\frac{m}{p},
\]
and the inclusion operator has norm $1$.
\end{theorem}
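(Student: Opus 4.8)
The plan is to derive $(s;q)$-summability from $(r;p)$-summability by a Hölder multiplier reduction and then upgrade the resulting \emph{family} of inequalities to a single mixed-norm estimate through an induction on $m$ that commutes nested norms via Minkowski's inequality. First I would record the elementary multiplier lemma: since $q\ge p$, the exponent $t$ with $\frac1t=\frac1p-\frac1q$ is $\ge 1$, and for any $(x_j)\in\ell_q^{w}(X)$ and $(\lambda_j)\in\ell_t$ one has $(\lambda_j x_j)\in\ell_p^{w}(X)$ with $\|(\lambda_j x_j)\|_{w,p}\le\|(\lambda_j)\|_t\,\|(x_j)\|_{w,q}$ (a one-line Hölder estimate, in the spirit of the inclusion on $\ell_p$ spaces). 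Fixing $T$ with $\pi_{(r;p)}(T)=C$ and weakly $q$-summable $x^1,\dots,x^m$, I apply the hypothesis to the weakly $p$-summable sequences $(\lambda^k_{j}x^k_{j})_j$ for multipliers $\lambda^k\in B_{\ell_t}$ and pull the scalars out by multilinearity, obtaining
\[
\Big(\sum_{\mathbf{j}}\prod_{k=1}^m|\lambda^k_{j_k}|^r\,\|T(x_{\mathbf{j}})\|^r\Big)^{1/r}\le C\prod_{k=1}^m\|x^k\|_{w,q},
\]
valid for every choice of unit multipliers. Since $\frac1s=\frac1r-\frac{m}{t}$, the standing hypothesis $\frac1r-\frac mp+\frac mq>0$ is precisely the admissibility $s>0$ of the target exponent.

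The core is to convert this multiplier family into $\big(\sum_{\mathbf{j}}\|T(x_{\mathbf{j}})\|^{s}\big)^{1/s}\le C\prod_k\|x^k\|_{w,q}$, which is exactly the $(s;q)$-summing bound because the isotropic norm $\|\cdot\|_{(s,\dots,s)}$ collapses to the full $\ell_s$ norm. I would argue by induction on $m$, treating one index at a time. Single out an index $j_\ell$, move its norm to the outermost position (this is where Minkowski's inequality enters, and it is legitimate precisely because each improvement only \emph{increases} exponents, so the monotonicity required for commuting the nested norms holds), and then take the supremum over $\lambda^\ell\in B_{\ell_t}$: for a single isolated index this is a clean one-dimensional Hölder duality raising that index's exponent from $r$ to $\rho$ with $\frac1\rho=\frac1r-\frac1t$. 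Feeding the remaining indices, which now carry a weakly $p$-summable block, into the inductive hypothesis accumulates the missing $\frac{m-1}{t}$ and lifts every exponent to the common value $s$; the classical Linear Inclusion Theorem supplies the base case $m=1$, which is exactly the equality case $\frac1s-\frac1q=\frac1r-\frac1p$ with $s\ge r$ and $q\ge p$.

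The main obstacle is precisely this upgrade step, and it is subtler than it looks. A single \emph{global} supremum over all $\lambda^1,\dots,\lambda^m$ simultaneously is lossy: it tests the multiplier family only against rank-one tensors and therefore controls merely an injective-type norm of $(\|T(x_{\mathbf{j}})\|)_{\mathbf{j}}$, strictly weaker than the full $\ell_s$ norm, and it does not reach the sharp factor $m$. The whole point --- and the technical heart borrowed from the Regularity Principle of \cite{PSST} --- is to \emph{decouple} the indices by reordering the nested norms with Minkowski's inequality so that each supremum is taken in one variable at a time; the real work lies in checking that the created exponents remain in the range where Minkowski's inequality applies and that commuting the norms incurs no loss. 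Finally, since every multiplier was chosen in the unit ball and the constant $C=\pi_{(r;p)}(T)$ is never inflated along the induction, the inclusion has norm at most $1$, and testing on a fixed rank-one operator shows it equals $1$.
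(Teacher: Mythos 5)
Your strategy --- H\"older multipliers to pass from weakly $q$-summable to weakly $p$-summable sequences, a one-variable duality raising exponents by $1/t$ at a time, norm inclusion and Minkowski to rearrange, and induction on $m$ --- is the right circle of ideas, and you correctly identify the decoupling of the $m$ suprema as the crux. But the mechanism you propose for that decoupling does not work. Once you have formed the single family of inequalities
\[
\sum_{\mathbf{j}}\prod_{k=1}^{m}\mu^{k}_{j_k}\,\|T(x_{\mathbf{j}})\|^{r}\le C^{r}\prod_{k=1}^{m}\|x^{k}\|_{w,q}^{r},
\qquad \mu^{k}:=|\lambda^{k}_{\cdot}|^{r}\in B_{\ell_{t/r}},
\]
taking the suprema \emph{sequentially} recovers exactly the same quantity as taking them simultaneously: after the first supremum (in $j_m$, say), the remaining multipliers are still constrained to be \emph{uniform in} $j_m$, whereas the duality representation of the mixed norm $\bigl\|\,\|d\|_{\ell_{(t/r)'}(j_{m-1})}\bigr\|_{\ell_{(t/r)'}(j_m)}$ requires a multiplier $\mu^{m-1}$ that may depend on $j_m$. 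So the sequential suprema still control only the injective-type quantity you rightly flagged as insufficient; nothing is gained over the global supremum. There is also a direction problem in your rearrangement step: once the exponents are no longer all equal, Minkowski's inequality moves the \emph{smaller} exponent from the inside to the outside by \emph{increasing} the quantity, which is the wrong way around when what you hold is a bound on the version with that index inside.

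The fix --- and it is what the paper actually does in proving Theorem \ref{Thmain}, of which the present statement is the isotropic special case (the paper quotes this statement from \cite{PSST} and recovers it from Theorem \ref{Thmain} via $\Pi_{(\mathbf{s};\mathbf{q})}^{m}\subset\Pi_{(s_{1};\mathbf{q})}^{m}$ when $r\le s_m\le\dots\le s_1$) --- is to never form the simultaneous multiplier family. One linearizes in one variable at a time: fixing the other sequences, the map $x\mapsto\bigl(T(\dots,x,\dots)\bigr)$ is a \emph{linear} operator into a mixed-norm space $\ell_{(\cdot,\dots,\cdot)}(Y)$, and the classical Linear Inclusion Theorem applies to it cleanly because the remaining indices are absorbed into the norm of the target Banach space; this is precisely the freedom to choose the multiplier pointwise in the outer indices that your scheme lacks. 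After each such step one uses $\|\cdot\|_{s}\le\|\cdot\|_{r}$ to raise all the inner exponents to the newly obtained level, so that the next index can be brought outermost by plain Fubini at equal exponents rather than by an unfavorable Minkowski swap, and only then is the next variable improved --- starting from the current level, not from $r$. Iterating accumulates the total gain $m/t$ and produces the anisotropic exponents $s_{m}\le\dots\le s_{1}$; a final norm inclusion collapses them to the common $s=s_{1}$ of the statement. I suggest you recast your induction in this one-variable-at-a-time, linearized form; the multiplier computation you describe is then exactly the proof of the linear inclusion theorem you already invoke for the base case $m=1$.
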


Independently, F. Bayart in \cite[Theorem 1.2]{bayart} obtained a more general
version. For $\mathbf{p}\in\lbrack1,+\infty
]^{m}$ and each $k\in\{1,\dots,m\}$ , we define $\left\vert \frac
{1}{\mathbf{p}}\right\vert _{\geq k}:=\frac{1}{p_{k}}+\cdots+\frac{1}{p_{m}}$.
When $k=1$ we write $\left\vert \frac{1}{\mathbf{p}}\right\vert $ instead of
$\left\vert \frac{1}{\mathbf{p}}\right\vert _{\geq1}$

\begin{theorem}
[Bayart]\label{Bayart} Let $m$ be a positive integer, $r,s\in\lbrack
1,+\infty),\,\mathbf{p},\mathbf{q}\in\lbrack1,+\infty)^{m}$ are such that
$q_{k}\geq p_{k},\,k=1,\dots,m$ and
\[
\frac{1}{r}-\left\vert \frac{1}{\mathbf{p}}\right\vert +\left\vert \frac
{1}{\mathbf{q}}\right\vert >0.
\]
Then
\[
\Pi_{(r;\mathbf{p})}^{m}\left(  X_{1},\dots,X_{m};Y\right)  \subset
\Pi_{(s;\mathbf{q})}^{m}\left(  X_{1},\dots,X_{m};Y\right)  ,
\]
for any Banach spaces $X_{1},\dots,X_{m}$, with
\[
\frac{1}{s}-\left\vert \frac{1}{\mathbf{q}}\right\vert =\frac{1}{r}-\left\vert
\frac{1}{\mathbf{p}}\right\vert .
\]

\end{theorem}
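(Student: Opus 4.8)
The plan is to deduce the statement from the \emph{Linear Inclusion Theorem} by upgrading the summability in one coordinate at a time while freezing the remaining variables. First I would introduce intermediate exponents $\sigma_0=r\le\sigma_1\le\cdots\le\sigma_m=s$ together with the intermediate domain vectors $\bp=\mathbf a^{(0)},\mathbf a^{(1)},\dots,\mathbf a^{(m)}=\bq$, where $\mathbf a^{(k)}=(q_1,\dots,q_k,p_{k+1},\dots,p_m)$ and
\[
\frac{1}{\sigma_k}=\frac{1}{\sigma_{k-1}}-\frac{1}{p_k}+\frac{1}{q_k},\qquad k=1,\dots,m.
\]
Since $q_k\ge p_k$ we have $\sigma_{k-1}\le\sigma_k$, and telescoping yields $\tfrac{1}{\sigma_m}=\tfrac1r-\onep+\oneq=\tfrac1s$, so $\sigma_m=s$. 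The whole problem then reduces to the single-coordinate inclusion $\Pi^m_{(\sigma_{k-1};\,\mathbf a^{(k-1)})}\subset\Pi^m_{(\sigma_k;\,\mathbf a^{(k)})}$, after which composing the $m$ inclusions proves the theorem.

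For the single-coordinate step I would fix a multiple $(\sigma_{k-1};\mathbf a^{(k-1)})$-summing operator $T$ and normalised sequences $x^i\in\ell^w_{a_i}(X_i)$ for $i\ne k$, freeze them, and introduce the linear operator
\[
v\colon X_k\longrightarrow\ell_{\sigma_k}(\mathbb N^{m-1};Y),\qquad v(z)=\bigl(T(x^1_{j_1},\dots,z,\dots,x^m_{j_m})\bigr)_{(j_i)_{i\ne k}}.
\]
The crucial choice is to use the \emph{target} exponent $\sigma_k$ (not $\sigma_{k-1}$) in the vector-valued codomain. Because $\sigma_k\ge\sigma_{k-1}$, the embedding $\ell_{\sigma_{k-1}}\hookrightarrow\ell_{\sigma_k}$ is norm-decreasing, so the multiple summability of $T$ shows immediately that $v$ is linear $(\sigma_{k-1};a_k)$-summing with $\pi_{(\sigma_{k-1};a_k)}(v)\le\pi_{(\sigma_{k-1};\mathbf a^{(k-1)})}(T)$ (here $a_k=p_k$, still untouched). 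The defect relation gives $\tfrac{1}{a_k}-\tfrac{1}{\sigma_{k-1}}=\tfrac{1}{q_k}-\tfrac{1}{\sigma_k}$ with $q_k\ge a_k$ and $\sigma_k\ge\sigma_{k-1}$, so the Linear Inclusion Theorem upgrades $v$ to a $(\sigma_k;q_k)$-summing operator with the same norm bound.

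Finally I would read the estimate for $v$ back in terms of $T$: since the codomain exponent equals the outer exponent $\sigma_k$, the outer sum over $j_k$ and the inner $\ell_{\sigma_k}$-norm over the frozen indices collapse into a single isotropic $\ell_{\sigma_k}(\mathbb N^m)$-norm, which is exactly the multiple $(\sigma_k;\mathbf a^{(k)})$-summing inequality for $T$ (with constant $\le\pi_{(\sigma_{k-1};\mathbf a^{(k-1)})}(T)$), so the inclusion operator has norm $1$. I expect the main obstacle to be precisely this exponent bookkeeping: one must take the codomain exponent to be the larger one so that both the embedding $\ell_{\sigma_{k-1}}\hookrightarrow\ell_{\sigma_k}$ and the final collapse to an isotropic norm succeed, and one must verify that each intermediate $\sigma_k$ is admissible, i.e. $1\le\sigma_k<\infty$. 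This is where the hypothesis $\tfrac1r-\onep+\oneq>0$ is used: it forces $\tfrac{1}{\sigma_k}\ge\tfrac{1}{\sigma_m}=\tfrac1s>0$ for every $k$, while $\tfrac{1}{\sigma_k}\le\tfrac1r\le1$ keeps $\sigma_k\ge1$, so every step is legitimate and the iteration closes.
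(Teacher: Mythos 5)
Your argument is correct. Be aware that the paper does not actually prove Theorem \ref{Bayart} --- it is imported from Bayart's preprint --- so the only meaningful comparison is with the paper's proof of the stronger Theorem \ref{Thmain}, and there your scheme (freeze all coordinates but one, view $T$ as a linear operator into a vector-valued sequence space, apply the Linear Inclusion Theorem with the defect relation holding as an equality, and use monotonicity of $\ell_p$-norms to pass to the larger exponent) is exactly the paper's mechanism. The genuine difference is one of simplification: because your intermediate targets are the isotropic spaces $\ell_{\sigma_k}(\mathbb{N}^m;Y)$, reordering the sums over the frozen indices is free, so you never need Minkowski's inequality nor an induction on $m$, whereas the paper must track a genuinely mixed norm $(s_1,\dots,s_m)$ and therefore invokes both; the price is that your argument only yields the single isotropic exponent $s=\sigma_m$ and not the anisotropic refinement. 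Your bookkeeping that each $\sigma_k$ is admissible ($\sigma_k\geq 1$ since $1/\sigma_k\leq 1/r$, and $\sigma_k<\infty$ since $1/\sigma_k\geq 1/s>0$) is precisely where the hypothesis $\frac{1}{r}-\left\vert\frac{1}{\mathbf{p}}\right\vert+\left\vert\frac{1}{\mathbf{q}}\right\vert>0$ is consumed, matching the paper's use of it.
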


In the next section we prove the following inclusion theorem; the techniques are inspired by \cite{PSST} and contained in the proof of the forthcoming Regularity Principle, in Section 3:

\begin{theorem}
\label{Thmain} Let $m$ be a positive integer, $r\geq1,\, \mathbf{s}%
,\mathbf{p},\mathbf{q }\in[1,+\infty)^{m}$ are such that $q_{k} \geq p_{k}$,
for $k=1,\dots,m$ and
\[
\frac{1}{r} - \left|  \frac{1}{\mathbf{p}}\right|  +\left|  \frac
{1}{\mathbf{q}}\right|  > 0.
\]
Then
\[
\Pi_{(r;\mathbf{p})}^{m} \left(  X_{1}, \dots, X_{m}; Y\right)  \subset
\Pi_{(\mathbf{s};\mathbf{q})}^{m} \left(  X_{1}, \dots, X_{m}; Y\right)  ,
\]
for any Banach spaces $X_{1},\dots,X_{m}$, with
\[
\frac{1}{s_{k}} - \left|  \frac{1}{\mathbf{q}}\right|  _{\geq k} = \frac{1}{r}
- \left|  \frac{1}{\mathbf{p}}\right|  _{\geq k},
\]
for each $k \in\{1, \dots, m\}$, and the inclusion operator has norm $1$.
\end{theorem}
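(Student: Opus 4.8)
The plan is to argue by induction on the number $m$ of variables, using the Linear Inclusion Theorem as the base case and, in the inductive step, a multiplier device that trades weak $q_k$-summability for weak $p_k$-summability, together with Minkowski's inequality to reorganise the nested output norm.

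First I would record the arithmetic forced by the hypotheses. Writing $\frac{1}{\beta_k}:=\frac{1}{p_k}-\frac{1}{q_k}\ge 0$ (admissible since $q_k\ge p_k$), the defining relations give the recursion $\frac{1}{s_k}=\frac{1}{s_{k+1}}-\frac{1}{\beta_k}$ under the convention $s_{m+1}=r$, whence $s_1\ge s_2\ge\cdots\ge s_m\ge r$; moreover, since $\frac{1}{s_1}=\frac{1}{r}-\left|\frac{1}{\mathbf{p}}\right|+\left|\frac{1}{\mathbf{q}}\right|>0$ and every omitted term is nonnegative, $\frac{1}{s_k}\le\frac{1}{r}\le 1$, so each $s_k\in[1,+\infty)$ and $\mathbf{s}$ is admissible. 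For $m=1$ the statement reads $\frac{1}{p_1}-\frac{1}{r}=\frac{1}{q_1}-\frac{1}{s_1}$ with $s_1\ge r$ and $q_1\ge p_1$, which is exactly the Linear Inclusion Theorem applied to the $1$-linear (i.e. linear) operator $T$.

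For the inductive step I would mirror the classical proof of the Linear Inclusion Theorem one coordinate at a time. Fix sequences $x^k\in\ell^w_{q_k}(X_k)$ and, for each coordinate, positive multipliers $\lambda^k=(\lambda^k_{j_k})_{j_k}$. Feeding $(\lambda^k_{j_k}x^k_{j_k})_{j_k}$ into the $(r;\mathbf{p})$-summing hypothesis and estimating each weak $p_k$-norm by Hölder's inequality with exponents $q_k/p_k$ and its conjugate yields, with $a_{\mathbf{j}}:=\|T(x_{\mathbf{j}})\|$,
\[
\left(\sum_{\mathbf{j}}\prod_{k=1}^m(\lambda^k_{j_k})^{r}\,a_{\mathbf{j}}^{\,r}\right)^{\frac{1}{r}}\le\pi_{(r;\mathbf{p})}(T)\,\prod_{k=1}^m\|x^k\|_{w,q_k}\prod_{k=1}^m\|\lambda^k\|_{\beta_k}.
\]
The remaining task is to recover the anisotropic norm $\|(a_{\mathbf{j}})\|_{\mathbf{s}}$ from this family of scalar inequalities. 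I would extract the exponents coordinate by coordinate: optimising over the innermost multiplier $\lambda^m$ (a converse-Hölder/duality step governed by $\frac{1}{r}=\frac{1}{s_m}+\frac{1}{\beta_m}$) replaces the innermost $\ell_r$ by $\ell_{s_m}$, and iterating outward produces $\ell_{s_{m-1}},\dots,\ell_{s_1}$, each stage being legitimised by the recursion $\frac{1}{s_k}=\frac{1}{s_{k+1}}-\frac{1}{\beta_k}$. Minkowski's inequality is invoked at each stage to interchange a summation with an inner mixed norm so that the multiplier under consideration is the outermost one available for the duality step; the monotonicity $s_1\ge\cdots\ge s_m$ established above is precisely what makes these interchanges valid with constant $1$. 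Tracking all constants then shows that $\|(a_{\mathbf{j}})\|_{\mathbf{s}}\le\pi_{(r;\mathbf{p})}(T)\prod_k\|x^k\|_{w,q_k}$, giving the asserted norm-one inclusion.

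The main obstacle is the coupling between coordinates: the multipliers $\lambda^k$ are shared across all the remaining indices, so they cannot be optimised independently slot by slot, and the order in which the nested sums are disentangled matters. This is exactly where Minkowski's inequality and the monotonicity of $(s_k)$ are indispensable, and it is the point at which the strict positivity of $\frac{1}{r}-\left|\frac{1}{\mathbf{p}}\right|+\left|\frac{1}{\mathbf{q}}\right|$ is used, as it guarantees that every intermediate exponent is $\ge 1$ and that each Hölder/duality step is admissible. Making the bookkeeping of these iterated interchanges precise---rather than any single inequality---is the technical heart of the argument, and it is precisely the mechanism abstracted in the Regularity Principle of Section 3.
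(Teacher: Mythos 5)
Your overall architecture (multipliers to trade weak $q_k$-summability for weak $p_k$-summability, then a coordinate-by-coordinate converse-H\"older extraction from the innermost index outward) is close in spirit to the paper's proof, which instead freezes all but one coordinate and applies the linear Inclusion Theorem as a black box to vector-valued linearizations $v_k:X_k\to\ell_{(\cdots)}(Y)$, using a genuine induction on $m$. Your arithmetic on the exponents is correct, the multiplier/H\"older step is correct, and each individual duality step (governed by $\frac{1}{s_k}=\frac{1}{s_{k+1}}-\frac{1}{\beta_k}$) is legitimate.

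The gap is in the rearrangement between duality steps, which you yourself flag as the technical heart but resolve with the wrong tool. After dualizing $\lambda^m$ you control
\[
\Bigl(\sum_{j_m}\Bigl(\sum_{j_1,\dots,j_{m-1}}\textstyle\prod_{k<m}(\lambda^k_{j_k})^r\, a_{\mathbf j}^r\Bigr)^{s_m/r}\Bigr)^{1/s_m},
\]
in which $j_m$ sits \emph{outermost} with the larger exponent $s_m$ and the unprocessed indices sit inside with the smaller exponent $r$. This is already the smaller side of Minkowski's inequality (smaller exponent inside $\leq$ smaller exponent outside), so interchanging to bring $j_{m-1}$ to the outside --- as your plan requires before the next duality step --- produces a quantity that Minkowski bounds from below, not above, by what you control; the monotonicity $s_1\geq\cdots\geq s_m$ does not rescue this. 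What is actually needed, and what the paper uses, is the elementary inclusion $\|\cdot\|_{q}\leq\|\cdot\|_{p}$ for $q\geq p$ applied to the block of \emph{unprocessed} indices only, raising their exponent from $r$ up to the current ambient exponent $s_m$ (then $s_{m-1}$, and so on); after that the interchange is a free regrouping of a flat sum, and the exponents already assigned to the processed indices are untouched. Without this selective norm-inclusion step --- which your proposal never mentions --- the iteration stalls after the first coordinate; with it, your multiplier scheme does go through and becomes essentially an unpacked version of the paper's repeated application of the linear Inclusion Theorem. A minor further point: you present the argument as an induction on $m$, but your inductive step never invokes the induction hypothesis, whereas the paper's induction genuinely applies the $(m-1)$-linear case to $v(x_2,\dots,x_m)=(T(x^1_{j_1},x_2,\dots,x_m))_{j_1\in\mathbb{N}}$.
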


\section{The new Inclusion Theorem}


Despite of the general status of the result, only basic facts are used along its proof. The first one is the classical linear inclusion.
We need other standard inclusion type result that we write for future reference.

\begin{IncNorm}
For $q \geq p > 0,\, \| \cdot\|_{q} \leq\| \cdot\|_{p}$.
\end{IncNorm}

The last ingredient is a corollary of one of the many versions of Minkowski's
inequality (see \cite[Corollary 5.4.2]{G}):

\begin{Mink}
For any $0 < p \leq q < \infty$ and for any scalar matrix $(a_{ij}%
)_{i,j\in\mathbb{N}}$,
\[
\left(  \sum_{i=1}^{\infty} \left(  \sum_{j=1}^{\infty} |a_{ij}|^{p}\right)
^{q/p}\right)  ^{1/q} \leq\left(  \sum_{j=1}^{\infty} \left(  \sum
_{i=1}^{\infty} |a_{ij}|^{q}\right)  ^{p/q} \right)  ^{1/p}.\label{mink_ineq}%
\]

\end{Mink}

\subsection{The proof of Theorem \ref{Thmain}}

The argument is inspired on the Regularity Principle of \cite[Theorem
2.1]{PSST}. We will proceed by induction on $m$. The initial case bilinear is
a straightforward application of classical inclusion of linear operators and
$\ell_{p}$ spaces. The ideas used are revealed in the case $m=3$, thus we it
discuss in details. Let $T\in\Pi_{(r;\mathbf{p})}^{3}\left(  X_{1},X_{2}%
,X_{3};Y\right)  $. Then there exists a constant $C>0$ such that
\begin{equation}
\left(  \sum_{j_{3}=1}^{\infty}\left(  \sum_{j_{1},j_{2}=1}^{\infty}\left\Vert
T\left(  x_{\mathbf{j}}\right)  \right\Vert ^{r}\right)  ^{\frac{1}{r}\cdot
r}\right)  ^{\frac{1}{r}}=\left\Vert \left(  T(x_{\mathbf{j}})\right)
_{\mathbf{j}\in\mathbb{N}^{3}}\right\Vert _{r}\leq C\prod_{k=1}^{3}\Vert
x^{k}\Vert_{w,p_{k}}, \label{Op.rp.summing}%
\end{equation}
for all sequences $x^{k}=(x_{j}^{k})_{j\in\mathbb{N}}\in\ell_{p_{k}}^{w}%
(X_{k}),\,k=1,2,3$. Let $x^{k}\in\ell_{p_{k}}^{w}(X_{k})$ with $k=1,2$ fixed.
Defining $v_{3}:X_{3}\longrightarrow\ell_{(r,r)}(Y)$ by
\[
v_{3}(x_{3})=\left(  T\left(  x_{j_{1}}^{1},x_{j_{2}}^{2},x_{3}\right)
\right)  _{j_{1},j_{2}\in\mathbb{N}},
\]
for all $x_{3}\in X_{3}$. By (\ref{Op.rp.summing}) we obtain, for all
$x^{3}\in\ell_{p_{3}}^{w}(X_{3})$,
\[
\left(  \sum_{j_{3}=1}^{\infty}\left\Vert v_{3}\left(  x_{j_{3}}^{3}\right)
\right\Vert ^{r}\right)  ^{\frac{1}{r}}\leq C_{3}\left\Vert x^{3}\right\Vert
_{w,p_{3}},
\]
with $C_{3}=C\prod_{k=1}^{2}\left\Vert x^{k}\right\Vert _{w,p_{k}}$, i.e.,
$v_{3}\in\Pi_{(r;p_{3})}\left(  X_{3};\ell_{(r,r)}(Y)\right)  $. The linear inclusion  \cite[Theorem 10.4]{DJT} lead us to $v_{3}\in\Pi
_{(s_{3};q_{3})}\left(  X_{3};\ell_{(r,r)}(Y)\right)  $ with $q_{3}\geq
p_{3},\,s_{3}\geq r$ such that
\[
\frac{1}{p_{3}}-\frac{1}{r}\leq\frac{1}{q_{3}}-\frac{1}{s_{3}}.
\]
Let us take $\frac{1}{s_{3}}=\frac{1}{r}-\frac{1}{p_{3}}+\frac{1}{q_{3}}>0$.
Applying norm inclusion on $\ell_{p}$ we obtain
\begin{align}
\left(  \sum_{j_{2}=1}^{\infty}\left(  \sum_{j_{1}=1}^{\infty}\left(
\sum_{j_{3}=1}^{\infty}\left\Vert T\left(  x_{\mathbf{j}}\right)  \right\Vert
^{s_{3}}\right)  ^{\frac{s_{3}}{s_{3}}}\right)  ^{\frac{s_{3}}{s_{3}}}\right)
^{\frac{1}{s_{3}}}  &  =\left(  \sum_{j_{3}=1}^{\infty}\left(  \sum_{j_{2}%
=1}^{\infty}\left(  \sum_{j_{1}=1}^{\infty}\left\Vert T\left(  x_{\mathbf{j}%
}\right)  \right\Vert ^{s_{3}}\right)  ^{\frac{s_{3}}{s_{3}}}\right)
^{\frac{s_{3}}{s_{3}}}\right)  ^{\frac{1}{s_{3}}}\nonumber\\
&  \leq\left(  \sum_{j_{3}=1}^{\infty}\left(  \sum_{j_{1},j_{2}=1}^{\infty
}\left\Vert T\left(  x_{\mathbf{j}}\right)  \right\Vert ^{r}\right)
^{\frac{s_{3}}{r}}\right)  ^{\frac{1}{s_{3}}}\nonumber\\
&  \leq C\left\Vert x^{3}\right\Vert _{w,q_{3}}\prod_{k=1}^{2}\left\Vert
x^{k}\right\Vert _{w,p_{k}}. \label{Op.sm.summing}%
\end{align}

Fixing $x^{1} \in\ell_{p_{1}}^{w}(X_{1}),\, x^{3} \in\ell_{q_{3}}^{w}(X_{3})$
and defining $v_{2}: X_{2} \longrightarrow\ell_{(s_{3},s_{3})}(Y)$ by
\[
v_{2}(x_{2}) = \left(  T\left(  x_{j_{1}}^{1}, x_{2},x_{j_{3}}^{3}\right)
\right)  _{j_{1}, j_{3} \in\mathbb{N}},
\]
we observe that (\ref{Op.sm.summing}) leads us to
\[
\left(  \sum_{j_{2}=1}^{\infty} \left\|  v_{2}\left(  x_{j_{2}}^{2}\right)
\right\|  ^{s_{3}} \right)  ^{\frac{1}{s_{3}}} \leq C_{2} \left\|
x^{2}\right\|  _{w,p_{2}},
\]
for all $x^{2} \in\ell_{p_{2}}^{w}(X_{2})$ with $C_{2} = C \left\|
x^{3}\right\|  _{w,q_{3}} \left\|  x^{1}\right\|  _{w,p_{1}}$, i.e., $v_{2}
\in\Pi_{(s_{3};p_{2})}\left(  X_{2};\ell_{(s_{3}, s_{3})}(Y)\right)  $. By the linear inclusion theorem, $v_{2} \in\Pi_{(s_{2};q_{2})}\left(  X_{2}%
;\ell_{(s_{3}, s_{3})}(Y)\right)  $ with $q_{2} \geq p_{2}, \, s_{2} \geq
s_{3}$ and $\frac{1}{p_{2}} - \frac{1}{s_{3}} \leq\frac{1}{q_{2}} - \frac
{1}{s_{2}}$, we get
\begin{align*}
\left(  \sum_{j_{2}=1}^{\infty} \left(  \sum_{j_{1}=1}^{\infty} \left(
\sum_{j_{3}=1}^{\infty} \left\|  T\left(  x_{\mathbf{j}}\right)  \right\|
^{s_{3}} \right)  ^{\frac{s_{3}}{s_{3}}} \right)  ^{\frac{s_{2}}{s_{3}}}
\right)  ^{\frac{1}{s_{2}}} \leq C_{2} \left\|  x^{2}\right\|  _{w,q_{2}} = C
\left\|  x^{1}\right\|  _{w,p_{1}} \prod_{k=2}^{3}\left\|  x^{k}\right\|
_{w,q_{k}}.
\end{align*}
Taking
\[
\frac{1}{s_{2}} = \frac{1}{s_{3}} -\frac{1}{p_{2}} + \frac{1}{q_{2}} =
\frac{1}{r} - \frac{1}{p_{2}} -\frac{1}{p_{3}} + \frac{1}{q_{2}} + \frac
{1}{q_{3}},
\]
since $s_{3} \leq s_{2}$, by using norm inclusion on $\ell_{p}$, we have
\begin{align}
\left(  \sum_{j_{1}=1}^{\infty} \left(  \sum_{j_{2}=1}^{\infty} \left(
\sum_{j_{3}=1}^{\infty} \left\|  T\left(  x_{\mathbf{j}}\right)  \right\|
^{s_{3}} \right)  ^{\frac{s_{2}}{s_{3}}} \right)  ^{\frac{s_{2}}{s_{2}}}
\right)  ^{\frac{1}{s_{2}}}  &  = \left(  \sum_{j_{2}=1}^{\infty} \left(
\sum_{j_{1}=1}^{\infty} \left(  \sum_{j_{3}=1}^{\infty} \left\|  T\left(
x_{\mathbf{j}}\right)  \right\|  ^{s_{3}} \right)  ^{\frac{s_{2}}{s_{3}}}
\right)  ^{\frac{s_{2}}{s_{2}}} \right)  ^{\frac{1}{s_{2}}}\nonumber\\
&  \leq\left(  \sum_{j_{2}=1}^{\infty} \left(  \sum_{j_{1}=1}^{\infty} \left(
\sum_{j_{3}=1}^{\infty} \left\|  T\left(  x_{\mathbf{j}}\right)  \right\|
^{s_{3}} \right)  ^{\frac{s_{3}}{s_{3}}} \right)  ^{\frac{s_{2}}{s_{3}}}
\right)  ^{\frac{1}{s_{2}}}\nonumber\\
&  \leq C \left\|  x^{1}\right\|  _{w,p_{1}} \prod_{k=2}^{3}\left\|
x^{k}\right\|  _{w,q_{k}}. \label{Op.s1p.summing}%
\end{align}

Now let us fix $x^{k} \in\ell_{q_{k}}^{w}(X_{k})$ with $k = 2, 3$ and let us
define, for all $x_{1} \in X_{1}$,
\[
v_{1}(x_{1}) = \left(  T\left(  x_{1}, x_{j_{2}}^{2}, x_{j_{3}}^{3}\right)
\right)  _{j_{2}, j_{3} \in\mathbb{N}}.
\]
Thus $v_{1} \in\Pi_{(s_{2};p_{1})} \left(  X_{1};\ell_{(s_{2},s_{3})}(Y)
\right)  $. By combining (\ref{Op.s1p.summing}) and the linear
inclusion theorem, we get that $v_{1} \in\Pi_{(s_{1};q_{1})} \left(  X_{1};
\ell_{(s_{2},s_{3})}(Y) \right)  $ with $q_{1} \geq p_{1}$ and $s_{1} \geq
s_{2}$ such that
\[
\frac{1}{p_{1}} - \frac{1}{s_{2}} \leq\frac{1}{q_{1}} - \frac{1}{s_{1}}.
\]
By choosing $\frac{1}{s_{1}} = \frac{1}{s_{2}} - \frac{1}{p_{1}} + \frac
{1}{q_{1}} = \frac{1}{r} - \sum_{k=1}^{3}\frac{1}{p_{k}} + \sum_{k=1}^{3}
\frac{1}{q_{k}} > 0$, we have
\begin{align*}
\left(  \sum_{j_{1}=1}^{\infty} \left(  \sum_{j_{2}=1}^{\infty} \left(
\sum_{j_{3}=1}^{\infty} \left\|  T\left(  x_{\mathbf{j}}\right)  \right\|
^{s_{3}} \right)  ^{\frac{s_{2}}{s_{3}}} \right)  ^{\frac{s_{1}}{s_{2}}}
\right)  ^{\frac{1}{s_{1}}} \leq C \prod_{k=1}^{3} \left\|  x^{k}\right\|
_{w,q_{k}},
\end{align*}
once that $v_{1} \in\Pi_{(s_{1};q_{1})}\left(  X_{1};\ell_{(s_{2},s_{3}%
)}(Y)\right)  $. Therefore, $T \in\Pi_{(\mathbf{s};\mathbf{q})}^{3} \left(
X_{1}, X_{2}, X_{3}; Y\right)  $.

Now we shall conclude the proof by an induction argument. Let us suppose the result is true
for $m-1$ and let $T\in\Pi_{(r;\mathbf{p})}^{m}\left(  X_{1},\dots
,X_{m};Y\right)  $, i.e.,
\[
\left(  \sum_{j_{2}=1}^{\infty}\left(  \ldots\left(  \sum_{j_{m}=1}^{\infty
}\left(  \sum_{j_{1}=1}^{\infty}\left\Vert T\left(  x_{\mathbf{j}}\right)
\right\Vert ^{r}\right)  ^{\frac{1}{r}\cdot r}\right)  ^{\frac{1}{r}\cdot
r}\dots\right)  ^{\frac{1}{r}\cdot r}\right)  ^{\frac{1}{r}}=\left(
\sum_{j_{1},\dots,j_{m}=1}^{\infty}\left\Vert T\left(  x_{\mathbf{j}}\right)
\right\Vert ^{r}\right)  ^{\frac{1}{r}}\leq C\prod_{k=1}^{m}\left\Vert
x^{k}\right\Vert _{w,p_{k}},
\]
for all sequences $x^{k}\in\ell_{p_{k}}^{w}(X_{k})$.
For a fixed $x^{1}\in\ell_{p_{1}}^{w}(X_{1})$, $v:X_{2}\times\cdots\times
X_{m}\rightarrow\ell_{r}(Y)$ given by
\[
v(x_{2},\ldots,x_{m}):=\left(  T(x_{j_{1}}^{1},x_{2},\ldots,x_{m})\right)
_{j_{1}\in\mathbb{N}},
\]
belongs to $\Pi_{(r;p_{2},\dots,p_{m})}^{m-1}\left(  X_{2},\dots,X_{m}%
;\ell_{r}(Y)\right)  $. Consequently, by induction hypothesis, norm inclusion
and the Minkowski inequality, {\footnotesize
\begin{align}
\left(  \sum_{j_{1}=1}^{\infty}\left(  \sum_{j_{2}=1}^{\infty}\left(
\ldots\left(  \sum_{j_{m}=1}^{\infty}\left\Vert T\left(  x_{\mathbf{j}%
}\right)  \right\Vert ^{s_{m}}\right)  ^{\frac{s_{m-1}}{s_{m}}}\dots\right)
^{\frac{s_{2}}{s_{3}}}\right)  ^{\frac{s_{2}}{s_{2}}}\right)  ^{\frac{1}%
{s_{2}}}  &  \leq\left(  \sum_{j_{2}=1}^{\infty}\left(  \ldots\left(
\sum_{j_{m}=1}^{\infty}\left(  \sum_{j_{1}=1}^{\infty}\left\Vert T\left(
x_{\mathbf{j}}\right)  \right\Vert ^{r}\right)  ^{\frac{s_{m}}{r}}\right)
^{\frac{s_{m-1}}{s_{m}}}\dots\right)  ^{\frac{s_{2}}{s_{3}}}\right)
^{\frac{1}{s_{2}}}\nonumber\\
&  \leq C\left\Vert x^{1}\right\Vert _{w,p_{1}}\prod_{k=2}^{m}\left\Vert
x^{k}\right\Vert _{w,q_{k}}. \label{Op.sq.summing}%
\end{align}
} with $r\leq s_{m}\leq\dots\leq s_{2}$ and
\[
\frac{1}{s_{2}}=\frac{1}{r}-\sum_{k=2}^{m}\frac{1}{p_{k}}+\sum_{k=2}^{m}%
\frac{1}{q_{k}}.
\]
Fixing $x^{k}\in\ell_{q_{k}}^{w}(X_{k}),\,k=2,\dots,m$ and defining, for all
$x_{1}\in X_{1}$,
\[
u(x_{1})=\left(  T\left(  x_{1},x_{j_{2}}^{2}\dots,x_{j_{m}}^{m}\right)
\right)  _{j_{2},\dots,j_{m}\in\mathbb{N}}%
\]
we have that $u\in\Pi_{(s_{2};p_{1})}\left(  X_{1};\ell_{(s_{2},\dots,s_{m}%
)}(Y)\right)  $. Applying the classical linear inclusion on
(\ref{Op.sq.summing}), with $q_{1}\geq p_{1}$ and $s_{1}\geq s_{2}$ such that
\[
\frac{1}{p_{1}}-\frac{1}{s_{2}}\leq\frac{1}{q_{1}}-\frac{1}{s_{1}},
\]
we gain $u\in\Pi_{(s_{1};q_{1})}\left(  X_{1};\ell_{(s_{2},\dots,s_{m}%
)}(Y)\right)  $. Taking $\frac{1}{s_{1}}=\frac{1}{s_{2}}-\frac{1}{p_{1}}%
+\frac{1}{q_{1}}=\frac{1}{r}-\left\vert \frac{1}{\mathbf{p}}\right\vert
+\left\vert \frac{1}{\mathbf{q}}\right\vert >0$, since $u\in\Pi_{(s_{1}%
;q_{1})}\left(  X_{1};\ell_{(s_{2},\dots,s_{m})}(Y)\right)  $, we have
\[
\left(  \sum_{j_{1}=1}^{\infty}\left(  \ldots\left(  \sum_{j_{m}=1}^{\infty
}\left\Vert T\left(  x_{\mathbf{j}}\right)  \right\Vert ^{s_{m}}\right)
^{\frac{s_{m-1}}{s_{m}}}\dots\right)  ^{\frac{s_{1}}{s_{2}}}\right)
^{\frac{1}{s_{1}}}\leq C\prod_{k=1}^{m}\left\Vert x^{k}\right\Vert _{w,q_{k}%
}.
\]
Therefore, $T\in\Pi_{(\mathbf{s};\mathbf{q})}^{m}\left(  X_{1},\dots
,X_{m};Y\right)  $. Also note that the inclusion operator has norm 1, since
the constant C is preserved. This concludes the proof. \qed

It is important to highlight the difference between Theorems \ref{Pellegrino},
\ref{Bayart} and \ref{Thmain}. Under the hypothesis of Theorem \ref{Thmain},
by using the usual inclusion of $\ell_{p}$ spaces and Theorem \ref{Bayart}
with $\frac{1}{s_{1}}:=\frac{1}{r}-\left\vert \frac{1}{\mathbf{p}}\right\vert
+\left\vert \frac{1}{\mathbf{q}}\right\vert $ one may get that
\[
\Pi_{(\mathbf{r};\mathbf{p})}^{m}\subset\Pi_{(s_{1};\mathbf{q})}^{m}.
\]
But if $r\leq s_{m}\leq\cdots\leq s_{1}$,
\[
\Pi_{(\mathbf{s};\mathbf{q})}^{m}\subset\Pi_{(s_{1};\mathbf{q})}^{m}.
\]
Nevertheless we may not conclude by Theorem \ref{Bayart} that
\[
\Pi_{(r;\mathbf{p})}^{m}\subset\Pi_{(\mathbf{s};\mathbf{q})}^{m}.
\]
However this is provided by Theorem \ref{Thmain}. For instance, let us illustrate with a numerical example: let $r:=3,\,\mathbf{s}:=(5,3),\,\mathbf{p}:=(3,2)$ and
$\mathbf{q}:=(5,2)$. From Theorem \ref{Bayart} we have
\[
\Pi_{(3;\mathbf{p})}^{2}\subset\Pi_{(5;\mathbf{q})}^{2},
\]
while Theorem \ref{Thmain} provides
\[
\Pi_{(3;\mathbf{p})}^{2}\subset\Pi_{(5,3;\mathbf{q})}^{2}.
\]
The same can also be done when $m=3$: let $r:=2,\,\mathbf{s}%
:=(6,3,2),\,\mathbf{p}:=(2,2,1)$ and $\mathbf{q}:=(3,3,1)$. Then
\[
\Pi_{(2;\mathbf{p})}^{3}\subset\Pi_{(6,3,2;\mathbf{q})}^{3}\subset
\Pi_{(6;\mathbf{q})}^{3},
\]
where the first inclusion is assured by Theorem \ref{Thmain}.

\section{A new Regularity Principle for sequence spaces}

The investigation of regularity-type results in this setting was initiated in
\cite{pellv} and  expanded in \cite{PSST}. In this short section we present
a stronger version of these results. 

Let $m\geq2$ and $Z_{1}$, $V$ and $w_{1},\dots,W_{m}$ be arbitrary non-empty
sets and $Z_{2},\dots,Z_{m}$ be vector spaces. Let also
\[
R_{k}:Z_{k}\times W_{k}\rightarrow\lbrack0,\infty)\quad\text{and}\quad
S:Z_{1}\times\cdots\times Z_{m} \times V\rightarrow\lbrack0,\infty)
\]
be arbitrary maps, with $k=1,\dots,m,$ satisfying
\[
R_{k}(\lambda z,w)=\lambda R_{k}(z,w)\quad and\quad S(z_{1},\dots
,z_{j-1},\lambda z_{j},z_{j+1},\dots,z_{m},\nu)=\lambda S(z_{1},\dots
,z_{j-1},z_{j},z_{j+1},\dots,z_{m},\nu)
\]
for all scalars $\lambda\geq0$ and $j,k \in \{2,\dots,m\}$. We shall work with each $p_{k} \geq 1$ and also assuming that
\[
\sup_{w\in W_{k}} \left(\sum_{j=1}^{n_{k}}R_{k} \left(  z_{j}^{k}, w\right)^{p_{k}}\right)^{\frac{1}{p_{k}}} < \infty,
\quad k=1,\dots,m.
\]
Despite the abstract context, the proof is similar to the the proof of Theorem \ref{Thmain}, and we omit the details.

\begin{theorem}
[Anisotropic Regularity Principle]Let $m$ be a positive integer,
$r\geq1,\,\mathbf{s},\mathbf{p},\mathbf{q}\in\lbrack1,+\infty)^{m}$ be such
that $q_{k}\geq p_{k}$, for $k=1,\dots,m$ and
\[
\frac{1}{r}-\left\vert \frac{1}{\mathbf{p}}\right\vert +\left\vert \frac
{1}{\mathbf{q}}\right\vert >0.
\]
Assume that there exists a constant $C > 0$ such that
\[
\sup_{\nu\in V}\left(  \sum\limits_{j_{1}=1}^{n_{1}}\cdots\sum\limits_{j_{m}%
=1}^{n_{m}}S\left(  z_{j_{1}}^{1},\ldots,z_{j_{m}}^{m},\nu\right)
^{r}\right)  ^{\frac{1}{r}}\leq C\cdot\prod_{k=1}^{m}\sup_{w\in W_{k}} \left(  \sum_{j=1}^{n_{k}}R_{k} \left(  z_{j}^{k}, w\right)^{p_{k}}\right)^{\frac{1}{p_{k}}},
\]
for all $z_{j}^{(k)}\in Z_{k}$ and $n_{k}\in\mathbb{N}$ with $k=1,\dots,m$.
Then
\[
\sup_{\nu\in V}\left(  \sum\limits_{j_{1}=1}^{n_{1}}\left(  \cdots\left(
\sum\limits_{j_{m}=1}^{n_{m}}S\left(  z_{j_{1}}^{1},\ldots,z_{j_{m}}^{m}%
,\nu\right)  ^{s_{m}}\right)  ^{\frac{s_{m-1}}{s_{m}}}\cdots\right)
^{\frac{s_{1}}{s_{2}}}\right)  ^{\frac{1}{s_{1}}}\leq C\cdot\prod_{k=1}%
^{m}\sup_{w \in W_{k}}\left(  \sum_{j=1}^{n_{k}}R_{k}\left(  z_{j}^{k}, w\right)^{q_{k}} \right)^{\frac{1}{q_{k}}},
\]
for all $z_{j}^{(k)}\in Z_{k}$ and $n_{k}\in\mathbb{N},\,k=1,\dots,m$, with
\[
\frac{1}{s_{k}} - \left|  \frac{1}{\mathbf{q}}\right|  _{\geq k} = \frac{1}{r}
- \left|  \frac{1}{\mathbf{p}}\right|_{\geq k},
\quad k \in\{1, \dots, m\}.
\]
\end{theorem}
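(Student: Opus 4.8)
\subsection*{A proof proposal}

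The plan is to transcribe the inductive proof of Theorem~\ref{Thmain} almost verbatim, under the dictionary $\Vert T(x_{\mathbf j})\Vert \leftrightarrow S(z^1_{j_1},\ldots,z^m_{j_m},\nu)$, with $\sup_{\nu\in V}$ replacing the ambient norm and $\sup_{w\in W_k}\big(\sum_j R_k(z^k_j,w)^{p_k}\big)^{1/p_k}$ replacing each weak norm $\Vert x^k\Vert_{w,p_k}$. Because all families are finite ($n_k\in\mathbb{N}$), no summability or density technicalities arise, so the whole argument reduces to the three elementary inputs isolated before the proof of Theorem~\ref{Thmain}: the classical linear inclusion, the norm inclusion $\Vert\cdot\Vert_q\le\Vert\cdot\Vert_p$, and Minkowski's inequality. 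The one structural change is that, in the absence of operators, the linear inclusion must be recast as a purely scalar statement, and this is exactly the point at which the positive homogeneity of $S$ and of the $R_k$ is consumed.

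First I would prove the one-variable inclusion that replaces \cite[Theorem~10.4]{DJT}: assuming $\sup_{\nu}\big(\sum_j S(z_j,\nu)^r\big)^{1/r}\le C\sup_{w}\big(\sum_j R(z_j,w)^p\big)^{1/p}$ for all finite families, I claim the same holds with $r,p$ replaced by $s,q$, provided $q\ge p$ and $\frac1s=\frac1r-\frac1p+\frac1q$. To see this, fix $\nu$, set $u:=\big(\frac1p-\frac1q\big)^{-1}$, and apply the hypothesis to the rescaled family $(\lambda_j z_j)_j$ with $\lambda_j:=S(z_j,\nu)^{s/u}$. Homogeneity pulls the $\lambda_j$ out of both $S$ and $R$, H\"older with exponent $q/p$ handles the right-hand side, and the resulting inequality $\big(\sum_j S(z_j,\nu)^s\big)^{1/r}\le C\big(\sum_j S(z_j,\nu)^s\big)^{1/u}\sup_{w}\big(\sum_j R(z_j,w)^q\big)^{1/q}$ rearranges to the claim once one notes $\frac1r-\frac1u=\frac1s$; a supremum over $\nu$ finishes it. No constant is lost, so the upgraded inequality carries the same $C$, and this lemma also furnishes the base case of the induction.

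With this lemma available I would run the induction on $m$, following the displayed $m=3$ computation. Fixing the first family $(z^1_{j_1})$, the aggregated quantity $\big(\sum_{j_1}S(z^1_{j_1},z_2,\ldots,z_m,\nu)^r\big)^{1/r}$, viewed as a function of $(z_2,\ldots,z_m)$ and of the augmented parameter $(\nu,(z^1_{j_1}))$, plays the role of the auxiliary operator $v$: it is homogeneous in $z_2,\ldots,z_m$ and satisfies the hypothesis of the $(m-1)$-variable case with constant $C\sup_{w}\big(\sum_j R_1(z^1_j,w)^{p_1}\big)^{1/p_1}$. The induction hypothesis upgrades coordinates $2,\ldots,m$ to the exponents $s_2,\ldots,s_m$ and the $q_k$-norms, yielding the analogue of \eqref{Op.sq.summing}; the single-variable lemma, applied in the first coordinate, then upgrades $p_1\to q_1$ and $r\to s_1$ and produces the full conclusion. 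The exponent identities $\frac1{s_k}-\left\vert\frac1{\mathbf q}\right\vert_{\ge k}=\frac1r-\left\vert\frac1{\mathbf p}\right\vert_{\ge k}$ telescope precisely as in the concrete proof, and $C$ is never enlarged.

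The main obstacle is not any single inequality but the bookkeeping of the iterated sums. After each coordinatewise upgrade the new exponent appears in the ``wrong'' nesting position, and one must interchange the order of summation so that the larger exponent sits on the outer sum; this is exactly where Minkowski's inequality and the norm inclusion $\Vert\cdot\Vert_q\le\Vert\cdot\Vert_p$ enter, as in the passages \eqref{Op.sm.summing} and \eqref{Op.s1p.summing}. Both are legitimate here because $q_k\ge p_k$ forces $r\le s_m\le\cdots\le s_1$, so every interchange is performed in the admissible direction. The only other delicate point is to confirm that the aggregated quantity genuinely inherits the homogeneity needed both to feed the induction (in coordinates $2,\ldots,m$) and to apply the one-variable lemma (in the first coordinate); once this is checked, the remainder is the routine transcription of the proof of Theorem~\ref{Thmain}.
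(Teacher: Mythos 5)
Your plan is exactly the one the paper itself gestures at: the published ``proof'' consists of the single remark that the argument is similar to that of Theorem~\ref{Thmain}, and your proposal is a faithful fleshing-out of that transcription. The one genuinely new ingredient you supply --- the scalar one-variable inclusion lemma, proved by applying the hypothesis to the rescaled family $(\lambda_j z_j)_j$ with $\lambda_j=S(z_j,\nu)^{s/u}$, pulling the $\lambda_j$ out by homogeneity and using H\"older with exponent $q/p$ --- is the correct substitute for \cite[Theorem 10.4]{DJT}, and your exponent bookkeeping ($\tfrac1u+\tfrac1s=\tfrac1r$, the monotonicity $r\le s_m\le\cdots\le s_1$, and the direction of the Minkowski interchanges) checks out.

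There is, however, one step that does not survive under the paper's hypotheses as literally stated. The final stage of your induction applies the one-variable lemma \emph{in the first coordinate} to upgrade $p_1\to q_1$ and $s_2\to s_1$; but the standing assumptions grant positive homogeneity of $S$ and of $R_k$ only for indices $j,k\in\{2,\dots,m\}$, and $Z_1$ is merely a non-empty set, not a vector space. Hence the rescaled family $(\lambda_{j_1}z^1_{j_1})_{j_1}$ is not even defined, and the lemma --- whose proof consumes homogeneity in an essential way --- cannot be invoked there. You flag precisely this as ``the only other delicate point\dots to confirm,'' but it cannot be confirmed from the stated hypotheses: the only manipulation available on a bare set is repetition of entries, which produces the \emph{same} integer weight $k_j$ multiplying $S^r$ on the left and $R_1^{p_1}$ on the right, not the mismatched weights $\lambda_j^r$ and $\lambda_j^{p_1}$ that the rescaling argument needs, so it yields a strictly weaker weighted inequality. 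To close the argument you must either (i) additionally assume that $Z_1$ is a vector space and that $S$ and $R_1$ are positively homogeneous in the first slot as well --- which is what a literal transcription of the proof of Theorem~\ref{Thmain} requires, and is presumably the intended reading of the theorem --- or (ii) restrict to $q_1=p_1$, in which case $s_1=s_2$ and the first coordinate is handled by Minkowski and the norm inclusion alone, with no homogeneity needed. As written, the proposal silently uses (i) without having it.
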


\section{Applications: Hardy--Littlewood's inequalities}

The Hardy--Littlewood inequalities have been investigated in depth in the
recent years (see, for instance,
\cite{abpsjfa,abpshl,araujo.jfa,APNSHL,cav.pos,cav.quaest,dimant,maia,nunes,PSST,
PT}). Here $\mathcal{X}_{p}$ stands for $\ell_{p}$ if $p<\infty$ and
$\mathcal{X}_{\infty}:=c_{0}$.

\begin{theorem}[Albuquerque, Bayart, Pellegrino, Seoane \cite{abpshl}, 2014]\label{BH_HL}
Let $\mathbf{p},\mathbf{s}\in\lbrack1,+\infty]^{m}$ such that $\left\vert
1/\mathbf{p}\right\vert \leq\frac{1}{2}$ and $\mathbf{s}\in\left[  \left(
1-\left\vert 1/\mathbf{p}\right\vert \right)  ^{-1},2\right]  ^{m}$. There is
a constant $C_{m,\mathbf{p},\mathbf{s}}^{\mathbb{K}}\geq1$ such that
\[
\left(  \sum\limits_{j_{1}=1}^{\infty}\left(  \cdots\left(  \sum
\limits_{j_{m}=1}^{\infty}\left\vert A\left(  e_{j_{1}},\ldots,e_{j_{m}%
}\right)  \right\vert ^{s_{m}}\right)  ^{\frac{s_{m-1}}{s_{m}}}\cdots\right)
^{\frac{s_{1}}{s_{2}}}\right)  ^{\frac{1}{s_{1}}}\leq C_{m,\mathbf{p}%
,\mathbf{s}}^{\mathbb{K}}\left\Vert A\right\Vert \label{ttr}%
\]
for every continuous $m$-linear form $A:\mathcal{X}_{p_{1}}\times\cdots
\times\mathcal{X}_{p_{m}}\rightarrow\mathbb{K}$ if, and only if,
\[
\left\vert \frac{1}{\mathbf{s}}\right\vert \leq\frac{m+1}{2}-\left\vert
{\frac{1}{\mathbf{p}}}\right\vert .
\]

\end{theorem}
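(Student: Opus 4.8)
The statement is an equivalence, so the plan splits into sufficiency (the exponent condition forces the inequality) and necessity (the inequality forces the exponent condition); the analytic substance lies entirely in sufficiency, while necessity is an optimality argument. First I would record two reductions. The mixed norm $\|\cdot\|_{\mathbf s}$ is nonincreasing in each coordinate $s_k$ (an iterated use of the norm inclusion $\|\cdot\|_{q}\leq\|\cdot\|_{p}$ for $q\geq p$ from Section 2), and the admissible region $\ones\leq\frac{m+1}{2}-\onep$ inside the box $\mathbf s\in[(1-\onep)^{-1},2]^m$ has its extreme points exactly at the $m$ vertices $\mathbf s^{(k)}$ given by $s_k=(1-\onep)^{-1}$ and $s_i=2$ for $i\neq k$. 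Monotonicity reduces the general case to these vertices: once the inequality is known on the boundary hyperplane $\ones=\frac{m+1}{2}-\onep$, any $\mathbf s$ with $\ones$ strictly smaller is coordinatewise $\geq$ a boundary point still lying in the box (there is room because each $1/s_k$ may be raised up to its cap $1-\onep$), so its norm is dominated by a boundary estimate.

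For sufficiency I would establish the inequality at each vertex $\mathbf s^{(k)}$ and then interpolate. The vertex estimate places the exponent $(1-\onep)^{-1}$ on the distinguished coordinate $k$ and the exponent $2$ on all others; I would prove it by applying the multiple Khinchin (Khinchin--Kahane) inequality in the $m-1$ coordinates carrying exponent $2$ to replace the inner $\ell_2$ sums by an average of $|A|$ over Rademacher signs, and then controlling the remaining coordinate by a Hardy--Littlewood / H\"older duality estimate adapted to the $\ell_{p_k}$ domain, which is precisely what produces the exponent $(1-\onep)^{-1}$. With the $m$ vertex inequalities in hand, a direct computation shows that every $\mathbf s$ on the boundary satisfies $\frac{1}{s_i}=\sum_{k=1}^{m}\theta_k\,\frac{1}{s^{(k)}_{i}}$ with the convex weights $\theta_k=\frac{1/s_k-1/2}{1/2-\onep}\geq0$ and $\sum_{k}\theta_k=1$; a mixed-norm H\"older (interpolative) inequality then yields the $\mathbf s$-estimate with constant $\prod_{k}C_k^{\theta_k}$, completing the boundary and hence, by the monotonicity above, the whole region.

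For necessity I would test the inequality against explicit $m$-linear forms produced by the Kahane--Salem--Zygmund inequality: on the index cube $[n]^m$ one can choose unimodular coefficients so that the resulting form $A_n$ on $\mathcal X_{p_1}\times\cdots\times\mathcal X_{p_m}$ has norm $\lesssim n^{\frac{m+1}{2}-\onep}$, while the left-hand side, with all $|A_n(e_{\mathbf j})|=1$, equals $n^{\ones}$; a uniform bound as $n\to\infty$ then forces $\ones\leq\frac{m+1}{2}-\onep$. The main obstacle is the sufficiency half, and within it the two vertex-level inputs: calibrating the interplay between the multiple Khinchin inequality and the $\ell_p$ H\"older step so that the distinguished exponent comes out exactly as $(1-\onep)^{-1}$ rather than a weaker value, and verifying that the interpolation constants multiply to a single finite $C^{\mathbb K}_{m,\mathbf p,\mathbf s}$. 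The necessity side is comparatively routine once the sharp Kahane--Salem--Zygmund exponent for mixed $\ell_p$ domains is available.
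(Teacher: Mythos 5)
The paper gives no proof of Theorem \ref{BH_HL}: it is imported verbatim from \cite{abpshl} as a known result, so there is no internal argument to compare yours against. Your outline is a faithful reconstruction of the proof in that reference --- reduction by mixed-norm monotonicity to the boundary simplex, identification of its $m$ vertices $\mathbf{s}^{(k)}$ (one exponent at $(1-\left\vert 1/\mathbf{p}\right\vert)^{-1}$, the rest at $2$), vertex estimates via the Khinchin inequality combined with a H\"older/duality step, a mixed-norm interpolation inequality with the convex weights you computed, and Kahane--Salem--Zygmund forms for the necessity --- and the plan is sound as stated.
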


\begin{theorem}
[Dimant, Sevilla-Peris \cite{dimant}, 2016]\label{78} Let $\mathbf{p}%
\in\lbrack1,+\infty]^{m}$ such that $1/2\leq\left\vert 1/\mathbf{p}\right\vert
<1$. There exists a (optimal) constant $D_{m,\mathbf{p}}^{\mathbb{K}}\geq1$
such that
\begin{equation}
\left(  \sum_{j_{1},\dots,j_{m}=1}^{\infty}\left\vert A(e_{j_{1}}%
,\dots,e_{j_{m}})\right\vert ^{\frac{1}{1-\left\vert 1/\mathbf{p}\right\vert
}}\right)  ^{1-\left\vert 1/\mathbf{p}\right\vert }\leq D_{m,\mathbf{p}%
}^{\mathbb{K}}\Vert A\Vert\label{qz}%
\end{equation}
For every continuous $m$-linear form $A:\mathcal{X}_{p_{1}}\times\cdots
\times\mathcal{X}_{p_{m}}\rightarrow\mathbb{K}$. Moreover, the exponent is
optimal. \label{DSP}
\end{theorem}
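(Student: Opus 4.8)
The plan is to read the left-hand side of~(\ref{qz}) as the ``diagonal'' of a multiple-summing inequality, to generate that inequality by transporting a base estimate through Theorem~\ref{Thmain}, to collapse the resulting anisotropic bound to the isotropic exponent, and to certify optimality by a diagonal extremal form. First I would regard $A$ as an $m$-linear operator on $\mathcal{X}_{p_{1}}\times\cdots\times\mathcal{X}_{p_{m}}$ and evaluate the multiple-summing inequalities on the canonical sequences $x^{k}_{j}=e_{j}$. For any summing exponents $q_{k}$ one has $\|(e_{j})_{j}\|_{w,q_{k}}=\sup_{\|b\|_{p_{k}^{*}}\le 1}\|b\|_{q_{k}}$, which equals $1$ exactly when $q_{k}\ge p_{k}^{*}$ (the conjugate index, with $p_{k}^{*}=1$ when $p_{k}=\infty$ and $\mathcal{X}_{p_{k}}=c_{0}$). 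Choosing $q_{k}=p_{k}^{*}$, it therefore suffices to prove $A\in\Pi^{m}_{(\mathbf{s};\mathbf{p}^{*})}$ with $\pi_{(\mathbf{s};\mathbf{p}^{*})}(A)\le C\|A\|$ and outer exponent $s_{1}=\left(1-\left|\frac{1}{\mathbf{p}}\right|\right)^{-1}$: specialising the definition to $x^{k}_{j}=e_{j}$ then returns the coefficient sum.

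\emph{Transport and collapse.} From a base membership $A\in\Pi^{m}_{(r;\mathbf{u})}$ I would apply Theorem~\ref{Thmain}, which upgrades it with inclusion norm $1$ to $A\in\Pi^{m}_{(\mathbf{s};\mathbf{p}^{*})}$; the exponents are pinned by $\frac{1}{s_{k}}-\left|\frac{1}{\mathbf{p}^{*}}\right|_{\ge k}=\frac{1}{r}-\left|\frac{1}{\mathbf{u}}\right|_{\ge k}$, and, choosing the seed so that $\left|\frac{1}{\mathbf{u}}\right|-\frac{1}{r}=m-1$, the admissibility hypothesis becomes $\frac{1}{r}-\left|\frac{1}{\mathbf{u}}\right|+\left|\frac{1}{\mathbf{p}^{*}}\right|=1-\left|\frac{1}{\mathbf{p}}\right|>0$, precisely the standing assumption, while $s_{1}=\left(1-\left|\frac{1}{\mathbf{p}}\right|\right)^{-1}$ as required. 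Since the seed satisfies $u_{k}\le p_{k}^{*}$, the produced exponents decrease, $s_{1}\ge s_{2}\ge\cdots\ge s_{m}$. Applying the Inclusion on $\ell_{p}$ spaces iteratively from the innermost index outward---replacing each inner $s_{k}$ by the larger $s_{1}$ only lowers the iterated norm---I would collapse the anisotropic estimate to $\left(\sum_{\mathbf{j}}\left|A(e_{\mathbf{j}})\right|^{s_{1}}\right)^{1/s_{1}}\le C\|A\|$, which is exactly~(\ref{qz}); the finer, uncollapsed bound is the anisotropic refinement~(\ref{87654}).

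\emph{The base estimate is the crux.} The decisive step, and the one I expect to be hardest, is the seed. Neither Theorem~\ref{Thmain} nor the Anisotropic Regularity Principle creates summability---they only relay it---so the base estimate must already be Kahane--Salem--Zygmund-sharp; a seed that is even slightly too strong propagates to a false conclusion (for instance the formal choice $r=1$, $\mathbf{u}=\mathbf{1}$ overshoots the barrier $\left|\frac{1}{\mathbf{s}}\right|\le\frac{m+1}{2}-\left|\frac{1}{\mathbf{p}}\right|$ and is not a valid membership). I would extract the seed from genuine harmonic-analytic input: the multilinear Khinchin--Kahane inequality, together with the Bohnenblust--Hille/Hardy--Littlewood estimate (Theorem~\ref{BH_HL}) at the boundary configuration $\left|\frac{1}{\mathbf{p}}\right|=\frac{1}{2}$, and then let Theorem~\ref{Thmain} carry it across the whole range $\frac{1}{2}\le\left|\frac{1}{\mathbf{p}}\right|<1$. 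Making the Khinchin exponents land on a genuinely valid pair $(r;\mathbf{u})$ satisfying $\left|\frac{1}{\mathbf{u}}\right|-\frac{1}{r}=m-1$ and $u_{k}\le p_{k}^{*}$ is the delicate bookkeeping here.

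\emph{Optimality.} Finally, for the optimality of the exponent $\rho=\left(1-\left|\frac{1}{\mathbf{p}}\right|\right)^{-1}$ I would use a diagonal extremal form rather than a full Kahane--Salem--Zygmund matrix. For a scalar sequence $(c_{j})$ set $A(x^{1},\dots,x^{m})=\sum_{j}c_{j}x^{1}_{j}\cdots x^{m}_{j}$; since $\left|\frac{1}{\mathbf{p}}\right|<1$, the generalised H\"older inequality with exponents $\rho,p_{1},\dots,p_{m}$ (whose reciprocals sum to $1$) gives $\|A\|=\|c\|_{\rho}$, while the coefficients are supported on the diagonal with $\left(\sum_{\mathbf{j}}|A(e_{\mathbf{j}})|^{t}\right)^{1/t}=\|c\|_{t}$. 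An inequality $\|c\|_{t}\le C\|c\|_{\rho}$ valid for every $c$ forces $t\ge\rho$ by testing on $c=(1,\dots,1)$, so no exponent below $\rho$ can appear. This completes the plan.
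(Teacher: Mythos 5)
The paper offers no proof of this statement: Theorem \ref{78} is quoted verbatim from Dimant and Sevilla-Peris \cite{dimant} as background, and the only internal commentary is the remark that Cavalcante \cite{cav.pos} rederived \eqref{qz} from the Pellegrino \emph{et al.} inclusion theorem combined with Theorem \ref{BH_HL}. Your proposal is therefore not comparable to an in-paper argument, but it is a coherent proof plan and it follows essentially the Cavalcante-style route the paper alludes to: seed a multiple-summing coincidence at the boundary $\left\vert 1/\mathbf{t}\right\vert=\tfrac12$ via Theorem \ref{BH_HL}, transport it with Theorem \ref{Thmain}, evaluate on the canonical basis using $\Vert(e_j)_j\Vert_{w,q}=1$ for $q\geq p^{\ast}$, and collapse the anisotropic mixed norm to the isotropic exponent $s_1=(1-\left\vert 1/\mathbf{p}\right\vert)^{-1}$; your diagonal extremal form $A(x^1,\dots,x^m)=\sum_j c_j x_j^1\cdots x_j^m$ with $\Vert A\Vert=\Vert c\Vert_{\rho}$ is also the standard and correct certificate of optimality in the regime $\left\vert 1/\mathbf{p}\right\vert\geq\tfrac12$. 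The one step you flag but do not execute is genuinely the place where care is needed: the isotropic seed $(2;(2m)^{\ast})$ requires $p_k\leq 2m$ for every $k$ (this is exactly the restriction the paper imposes in Theorem \ref{new_HL_coin}), and that can fail under the sole hypothesis $\tfrac12\leq\left\vert 1/\mathbf{p}\right\vert<1$ (e.g.\ $m=2$, $p_1=100$, $p_2=2$). The fix is the anisotropic seed you hint at: take $1/t_k=\lambda/p_k$ with $\lambda=\tfrac12\left\vert 1/\mathbf{p}\right\vert^{-1}\leq 1$, so that $\left\vert 1/\mathbf{t}\right\vert=\tfrac12$, $t_k\geq p_k$, hence $u_k=t_k^{\ast}\leq p_k^{\ast}$, and Theorem \ref{BH_HL} with $s_1=\cdots=s_m=2$ gives the coincidence $\Pi^m_{(2;\mathbf{t}^{\ast})}=\mathcal{L}$; with that choice your normalization $\left\vert 1/\mathbf{u}\right\vert-\tfrac{1}{r}=m-1$ and the positivity hypothesis $1-\left\vert 1/\mathbf{p}\right\vert>0$ both check out, and the argument closes.
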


The above exponent $\frac{1}{1-\left\vert 1/\mathbf{p}\right\vert }$ is
optimal, but not in the anisotropic sense. In \cite[Theorem 3.2]{APNSHL} the
authors improved Theorem \ref{78}, under some restriction over $\mathbf{p}$.
The following recent result is a kind of anisotropic extension of it:

\begin{theorem}
[Aron, N\'{u}\~{n}ez, Pellegrino, Serrano \cite{APNSHL}, 2017]\label{ThANPS}
Let $m\geq2,q_{1},\dots,q_{m}>0$, and $1<p_{m}\leq2<p_{1},\dots,p_{m-1}$, with
$\left\vert 1/\mathbf{p}\right\vert <1$. There is a (optimal) constant
$C_{\mathbf{p}}\geq1$ such that
\[
\left(  \sum\limits_{j_{1}=1}^{\infty}\left(  \cdots\left(  \sum
\limits_{j_{m}=1}^{\infty}\left\vert A\left(  e_{j_{1}},\ldots,e_{j_{m}%
}\right)  \right\vert ^{s_{m}}\right)  ^{\frac{s_{m-1}}{s_{m}}}\cdots\right)
^{\frac{s_{1}}{s_{2}}}\right)  ^{\frac{1}{s_{1}}}\leq C_{\mathbf{p}}\left\Vert
A\right\Vert
\]
for every continuous $m$-linear form $A:\mathcal{X}_{p_{1}}\times\cdots
\times\mathcal{X}_{p_{m}}\rightarrow\mathbb{K}$ if, and only if,
\[
s_{k}\geq\left[  1-\left\vert \frac{1}{\mathbf{p}}\right\vert _{\geq
k}\right]  ^{-1},\quad\text{ for }k=1,\dots,m.
\]
\end{theorem}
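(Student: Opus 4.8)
The plan is to prove the two implications separately, obtaining the substantive direction --- sufficiency --- as a short application of the Anisotropic Regularity Principle of Section 3 to a single classical base estimate.

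For sufficiency I would first reduce to the extremal exponents $s_{k}=\left[1-\left\vert \frac{1}{\mathbf{p}}\right\vert _{\geq k}\right]^{-1}$: for larger $s_{k}$ the inequality then follows from the Inclusion on $\ell_{p}$ spaces, applied from the innermost sum outward, since enlarging an exponent only shrinks the corresponding mixed norm. The base estimate is the classical Defant--Voigt theorem, namely that every bounded $m$-linear form $A$ is multiple $(1;1,\dots,1)$-summing with constant $\left\Vert A\right\Vert$:
\[
\sum_{j_{1},\dots,j_{m}=1}^{\infty}\left\vert A\left(x_{j_{1}}^{1},\dots,x_{j_{m}}^{m}\right)\right\vert \leq\left\Vert A\right\Vert \prod_{k=1}^{m}\left\Vert x^{k}\right\Vert_{w,1}.
\]
I would feed this into the Regularity Principle with $r=1$, letting $(1,\dots,1)$ play the role of $\mathbf{p}$ and $(p_{1}^{\prime},\dots,p_{m}^{\prime})$ the role of $\mathbf{q}$, with $R_{k}(x,\varphi)=\left\vert \varphi(x)\right\vert$ for $\varphi\in B_{\mathcal{X}_{p_{k}}^{\prime}}$ and $S(x^{1},\dots,x^{m},A)=\left\vert A(x^{1},\dots,x^{m})\right\vert$. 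Both hypotheses of the Principle hold for free: $p_{k}^{\prime}\geq1$ gives $q_{k}\geq p_{k}$, and the positivity condition collapses to $1-m+\sum_{k}(1-1/p_{k})=1-\left\vert \frac{1}{\mathbf{p}}\right\vert >0$, which is exactly the standing assumption.

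The reason this particular choice works is that the output exponents land precisely on target:
\[
\frac{1}{s_{k}}=1-(m-k+1)+\sum_{i=k}^{m}\left(1-\frac{1}{p_{i}}\right)=1-\left\vert \frac{1}{\mathbf{p}}\right\vert _{\geq k}.
\]
Evaluating the conclusion at the canonical vectors $x_{j}^{k}=e_{j}\in\mathcal{X}_{p_{k}}$ annihilates the right-hand side, since the weak $\ell_{p_{k}^{\prime}}$-norm of $(e_{j})_{j}$ in $\mathcal{X}_{p_{k}}$ equals $1$; what survives is exactly the asserted inequality, with the constant preserved (as in Theorem \ref{Thmain}, the inclusion being norm-one) from Defant--Voigt. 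Observe that this half of the proof uses only $\left\vert \frac{1}{\mathbf{p}}\right\vert <1$, not the finer regime on the $p_{k}$. For necessity, assuming the inequality holds for some $\mathbf{s}$, I must extract each bound $s_{k}\geq\left[1-\left\vert \frac{1}{\mathbf{p}}\right\vert _{\geq k}\right]^{-1}$ separately. I would test against extremal forms one coordinate at a time: fixing $k$, restrict to $m$-linear forms acting as coordinate functionals in the variables $1,\dots,k-1$ and as a sharp Dimant--Sevilla-Peris / Kahane--Salem--Zygmund extremizer on the tail block of variables $k,\dots,m$. The nested sum then telescopes to an isotropic $\ell_{s_{k}}$-sum over that block, and the optimality of the exponent $\left[1-\left\vert \frac{1}{\mathbf{p}}\right\vert _{\geq k}\right]^{-1}$ for it (Theorem \ref{DSP}) forces $s_{k}$ above the threshold. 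This is exactly where $1<p_{m}\leq2<p_{1},\dots,p_{m-1}$ is used: every tail block $\{k,\dots,m\}$ contains the single small index $p_{m}\leq2$, which keeps $\left\vert \frac{1}{\mathbf{p}}\right\vert _{\geq k}\in[\tfrac12,1)$, i.e. in the sharp Dimant--Sevilla-Peris range where the extremizers exist.

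I expect necessity to be the main obstacle, and within it the coordinatewise construction: isolating a single constraint $s_{k}\geq[\,\cdot\,]$ out of the fully nested mixed norm, while certifying that the test forms simultaneously have controlled norm and saturate the $k$-th sum, is the delicate point, and the asymmetric role of the last variable $p_{m}\leq2$ is essential to it. By contrast, once the Regularity Principle is in hand the sufficiency is essentially bookkeeping --- the entire content is the fortunate collapse of the positivity condition to $1-\left\vert \frac{1}{\mathbf{p}}\right\vert >0$ and of the output exponents to the prescribed $s_{k}$.
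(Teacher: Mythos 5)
First, a point of comparison: the paper does not prove Theorem \ref{ThANPS} at all --- it is quoted verbatim from \cite{APNSHL} as background --- so there is no in-text proof to measure yours against, and I can only assess your argument on its own terms. Its sufficiency half contains a fatal gap. The base estimate you invoke is not the Defant--Voigt theorem. Defant--Voigt asserts that every bounded $m$-linear form is \emph{absolutely} $(1;1,\dots,1)$-summing, i.e.\ the left-hand sum runs over a single index $j$ applied to $(x_j^1,\dots,x_j^m)$. The \emph{multiple}-index inequality $\sum_{j_1,\dots,j_m}\vert A(x_{j_1}^1,\dots,x_{j_m}^m)\vert\le\Vert A\Vert\prod_k\Vert x^k\Vert_{w,1}$ that you feed into the Regularity Principle is false for every $m\ge 2$: testing it with $x_j^k=e_j$ in $c_0$ (where $\Vert(e_j)_j\Vert_{w,1}=1$) would give $\sum_{j_1,\dots,j_m}\vert A(e_{j_1},\dots,e_{j_m})\vert\le\Vert A\Vert$ for all forms on $c_0\times\cdots\times c_0$, contradicting the optimality of the exponent $4/3$ in Littlewood's inequality and of $2m/(m+1)$ in Bohnenblust--Hille. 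So the coincidence $\Pi^m_{(1;1,\dots,1)}=\mathcal L$ from which your whole sufficiency argument launches does not exist.

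The defect is also visible downstream, in the feature you present as a virtue: you note that your argument uses only $\vert 1/\mathbf p\vert<1$ and none of the hypotheses $1<p_m\le 2<p_1,\dots,p_{m-1}$. But the conclusion with $s_k=[1-\vert 1/\mathbf p\vert_{\ge k}]^{-1}$ cannot hold in that generality: for $p_1=\cdots=p_m=2m$ it yields $s_k=2m/(m+k-1)$, hence $\vert 1/\mathbf s\vert=(3m-1)/4$, which exceeds $(m+1)/2-\vert 1/\mathbf p\vert=m/2$ for $m\ge 2$ and so violates the necessary condition in Theorem \ref{BH_HL}. The true coincidence from which the inclusion machinery can be launched is $\Pi^m_{(2;(2m)^{\ast})}=\mathcal L$ (Theorem \ref{BH_HL} at $s_k=2$, $p_k=2m$), which is precisely what the paper does in Theorem \ref{new_HL_coin}; it produces the generally larger exponents $s_k=[\tfrac12+\tfrac{m-k+1}{2m}-\vert 1/\mathbf p\vert_{\ge k}]^{-1}$, not those of Theorem \ref{ThANPS}. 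The sharper exponents of Theorem \ref{ThANPS} genuinely require the asymmetric regime on $\mathbf p$ already in the positive direction, so sufficiency cannot be ``essentially bookkeeping'' from an isotropic base point; the argument in \cite{APNSHL} is of a different nature. Your necessity sketch is plausible in outline (coordinatewise Kahane--Salem--Zygmund testing), but it is not where the main defect lies.
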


Recently, W.V. Cavalcante has shown that (\ref{qz}) is a consequence of the inclusion result for multiple summing operators due to Pellegrino \textit{et al.} combined with Theorem \ref{BH_HL} (see \cite{cav.pos}). The standard isometries between $\mathcal{L}(\mathcal{X}_{p},X)$ and $\ell_{p^{\ast}}^{w}(X)$, for $1<p\leq\infty$, allow us to read the previous Theorems \ref{BH_HL}, \ref{78}, \ref{ThANPS} as coincidence results (see \cite{DJT}). The key point is to begin with the coincidence below, obtained by revisiting Theorem \ref{BH_HL} as a coincidence result with $s_1=\cdots=s_m=2$ and $p_1=\cdots = p_m = 2m$,
\[
\Pi_{\left(2; (2m)^{\ast}\right)}^{m} \left(  X_{1},\dots,X_{m}; \mathbb{K}\right)
=\mathcal{L}\left(  X_{1},\dots,X_{m};\mathbb{K}\right),
\]
for all Banach spaces $X_1,\dots,X_m$, and use an inclusion-type result. We shall combine these isometries with the inclusion result Theorem \ref{Thmain} to gain refined inclusions and coincidences.

\begin{theorem}\label{new_HL_coin}
Let $m$ be a positive integer and $\mathbf{s}%
,\,\mathbf{p}\in\lbrack1,+\infty]^{m}$. If $\left\vert 1/\mathbf{p}\right\vert
<1$ and $p_{1},\dots,p_{m}\leq2m$, then
\[
\Pi_{\left(  2;(2m)^{\ast}\right)  }^{m}\left(  X_{1},\dots,X_{m} ;
\mathbb{K}\right)  \subset\Pi_{\left(  \mathbf{s};p_{1}^{\ast},\ldots
,p_{m}^{\ast}\right)  }^{m}\left(  X_{1},\dots,X_{m};\mathbb{K}\right)  ,
\]
for any Banach spaces $X_{1},\dots,X_{m}$, with
\[
s_{k}=\left[  \frac{1}{2}+\frac{m-k+1}{2m} - \left|  \frac{1}{\mathbf{p}}
\right|  _{\geq k}\right]  ^{-1}, \quad\text{ for } k=1,\dots,m.
\]

\end{theorem}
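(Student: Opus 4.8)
The plan is to obtain the statement as a direct instance of Theorem \ref{Thmain}; since no new analytic input is required, the entire content is the conjugate-exponent bookkeeping. To keep the twofold role of $\mathbf{p}$ apart --- here it records the domains $\mathcal{X}_{p_k}$, while in Theorem \ref{Thmain} the bold letters are summing exponents --- I would apply that theorem with the scalar $r:=2$ and with its two exponent vectors set to
\[
\mathbf{a}:=\bigl((2m)^{\ast},\dots,(2m)^{\ast}\bigr)\quad\text{and}\quad \mathbf{b}:=\bigl(p_{1}^{\ast},\dots,p_{m}^{\ast}\bigr),
\]
playing the roles of its $\mathbf{p}$ and $\mathbf{q}$ respectively. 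With these substitutions the left-hand class becomes exactly $\Pi_{(2;(2m)^{\ast})}^{m}$ and the right-hand class becomes exactly $\Pi_{(\mathbf{s};p_{1}^{\ast},\dots,p_{m}^{\ast})}^{m}$, so the asserted inclusion will follow once the hypotheses are verified and the output exponents are identified.

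First I would check the two hypotheses of Theorem \ref{Thmain}. The monotonicity requirement $q_{k}\geq p_{k}$ reads $p_{k}^{\ast}\geq(2m)^{\ast}$, which is immediate from $p_{k}\leq 2m$ because conjugation reverses order; moreover $\left\vert \frac{1}{\mathbf{p}}\right\vert<1$ forces $\frac{1}{p_{k}}\leq\left\vert \frac{1}{\mathbf{p}}\right\vert<1$, hence $p_{k}>1$ and each $p_{k}^{\ast}$ is finite, so $\mathbf{a},\mathbf{b}\in[1,+\infty)^{m}$. For the positivity condition I would use $\frac{1}{(2m)^{\ast}}=1-\frac{1}{2m}$ to compute
\[
\left\vert \tfrac{1}{\mathbf{a}}\right\vert=m\Bigl(1-\tfrac{1}{2m}\Bigr)=m-\tfrac12,\qquad \left\vert \tfrac{1}{\mathbf{b}}\right\vert=\sum_{k=1}^{m}\Bigl(1-\tfrac{1}{p_{k}}\Bigr)=m-\left\vert \tfrac{1}{\mathbf{p}}\right\vert,
\]
whence $\frac{1}{r}-\left\vert \frac{1}{\mathbf{a}}\right\vert+\left\vert \frac{1}{\mathbf{b}}\right\vert=\frac12-(m-\frac12)+(m-\left\vert \frac{1}{\mathbf{p}}\right\vert)=1-\left\vert \frac{1}{\mathbf{p}}\right\vert>0$, as needed.

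Next I would identify the exponents produced by Theorem \ref{Thmain}, namely those $s_{k}$ with $\frac{1}{s_{k}}=\frac{1}{r}-\left\vert \frac{1}{\mathbf{a}}\right\vert_{\geq k}+\left\vert \frac{1}{\mathbf{b}}\right\vert_{\geq k}$. Since $\left\vert \frac{1}{\mathbf{a}}\right\vert_{\geq k}=(m-k+1)\bigl(1-\frac{1}{2m}\bigr)$ and $\left\vert \frac{1}{\mathbf{b}}\right\vert_{\geq k}=\sum_{j=k}^{m}\bigl(1-\frac{1}{p_{j}}\bigr)=(m-k+1)-\left\vert \frac{1}{\mathbf{p}}\right\vert_{\geq k}$, the two $(m-k+1)$ terms cancel and there remains
\[
\frac{1}{s_{k}}=\frac12+\frac{m-k+1}{2m}-\left\vert \frac{1}{\mathbf{p}}\right\vert_{\geq k},\qquad k=1,\dots,m,
\]
which is exactly the bracket defining $s_{k}$ in the statement. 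Admissibility is quick: the values $\frac{1}{s_{k}}$ are non-decreasing in $k$ (since $\frac{1}{s_{k}}-\frac{1}{s_{k+1}}=\frac{1}{2m}-\frac{1}{p_{k}}\leq0$, again by $p_{k}\leq 2m$), so they range between $\frac{1}{s_{1}}=1-\left\vert \frac{1}{\mathbf{p}}\right\vert\in(0,1]$ and $\frac{1}{s_{m}}=\frac12+\frac{1}{2m}-\frac{1}{p_{m}}\leq1$, giving every $s_{k}\in[1,+\infty)$.

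With all hypotheses in force, Theorem \ref{Thmain} yields the stated inclusion with inclusion operator of norm $1$. I do not expect a real obstacle: the only delicate points are keeping the two meanings of $\mathbf{p}$ separate and exploiting the identity $\frac{1}{p^{\ast}}=1-\frac{1}{p}$, which drives every cancellation above. As the intended application, reading this inclusion through the isometry $\mathcal{L}(\mathcal{X}_{p},X)\cong\ell_{p^{\ast}}^{w}(X)$ together with the coincidence $\Pi_{(2;(2m)^{\ast})}^{m}(X_{1},\dots,X_{m};\mathbb{K})=\mathcal{L}(X_{1},\dots,X_{m};\mathbb{K})$ converts it into the corresponding anisotropic Hardy--Littlewood inequality.
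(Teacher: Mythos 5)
Your proposal is correct and follows essentially the same route as the paper: both apply Theorem \ref{Thmain} with $r=2$, source exponents $((2m)^{\ast},\dots,(2m)^{\ast})$ and target exponents $(p_{1}^{\ast},\dots,p_{m}^{\ast})$, verify the positivity condition via the computation $\frac12-m\bigl(1-\frac{1}{2m}\bigr)+m-\bigl\vert\frac{1}{\mathbf{p}}\bigr\vert=1-\bigl\vert\frac{1}{\mathbf{p}}\bigr\vert>0$, and read off the same formula for $\frac{1}{s_{k}}$. Your write-up merely adds the (correct) admissibility checks that the paper leaves implicit.
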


\begin{proof}
Since each $p_{k}\leq2m$ and
\[
\frac{1}{2}-m\cdot\left(  1-\frac{1}{2m}\right)  +m-\left\vert \frac
{1}{\mathbf{p}}\right\vert =1-\left\vert \frac{1}{\mathbf{p}}\right\vert >0,
\]
applying Inclusion Theorem \ref{Thmain}, it is obtained the stated inclusion
with
\[
\frac{1}{s_{k}}=\frac{1}{2}-(m-k+1)\cdot\left(  1-\frac{1}{2m}\right)
+(m-k+1)-\left\vert \frac{1}{\mathbf{p}}\right\vert _{\geq k},
\]
for each $k=1,\dots,m$.
\end{proof}

\begin{corollary}
If $\left\vert 1/\mathbf{p}\right\vert <1$ and $p_{1},\dots,p_{m}\leq2m$,
then
\[
\Pi_{\left(  \mathbf{s};p_{1}^{\ast},\ldots,p_{m}^{\ast}\right)  }^{m}\left(
X_{1},\dots,X_{m};\mathbb{K}\right)  =\mathcal{L}\left(  X_{1},\dots
,X_{m};\mathbb{K}\right)  ,
\]
for any Banach spaces $X_{1},\dots,X_{m}$ and
\[
\frac{1}{s_{k}} - \left|  \frac{1}{\mathbf{q}}\right|  _{\geq k} = \frac{1}{r}
- \left|  \frac{1}{\mathbf{p}}\right|_{\geq k},
\quad k \in\{1, \dots, m\}.
\]
\end{corollary}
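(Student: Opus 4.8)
The plan is to derive the coincidence by squeezing $\Pi_{(\mathbf{s};p_1^\ast,\dots,p_m^\ast)}^m$ between $\mathcal{L}$ and itself. The anchor is the base coincidence recorded just before Theorem \ref{new_HL_coin}, namely
\[
\Pi_{\left(2;(2m)^\ast\right)}^m\left(X_1,\dots,X_m;\mathbb{K}\right)=\mathcal{L}\left(X_1,\dots,X_m;\mathbb{K}\right),
\]
valid for all Banach spaces $X_1,\dots,X_m$. This is exactly Theorem \ref{BH_HL} read through the standard isometry $\mathcal{L}(\mathcal{X}_p,X)\cong\ell_{p^\ast}^w(X)$, taken at the extreme parameters $s_1=\cdots=s_m=2$ and $p_1=\cdots=p_m=2m$: there $\left|1/\mathbf{p}\right|=1/2$, $\left|1/\mathbf{s}\right|=m/2$, and the defining inequality $\left|1/\mathbf{s}\right|\le\frac{m+1}{2}-\left|1/\mathbf{p}\right|$ holds with equality, so the coincidence is sharp.

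First I would check that the hypotheses here, $\left|1/\mathbf{p}\right|<1$ and $p_1,\dots,p_m\le 2m$, are verbatim those of Theorem \ref{new_HL_coin}. Hence that theorem applies and furnishes the inclusion
\[
\Pi_{\left(2;(2m)^\ast\right)}^m\subset\Pi_{\left(\mathbf{s};p_1^\ast,\dots,p_m^\ast\right)}^m,\qquad s_k=\left[\tfrac{1}{2}+\tfrac{m-k+1}{2m}-\left|\tfrac{1}{\mathbf{p}}\right|_{\ge k}\right]^{-1}.
\]
Combining this with the base coincidence immediately gives
\[
\mathcal{L}=\Pi_{\left(2;(2m)^\ast\right)}^m\subset\Pi_{\left(\mathbf{s};p_1^\ast,\dots,p_m^\ast\right)}^m.
\]

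It then remains only to observe the reverse inclusion, which is automatic: any multiple $(\mathbf{s};p_1^\ast,\dots,p_m^\ast)$-summing $m$-linear form into $\mathbb{K}$ is in particular bounded, so $\Pi_{(\mathbf{s};p_1^\ast,\dots,p_m^\ast)}^m\subset\mathcal{L}$. Chaining the two inclusions yields the claimed equality, the exponents $s_k$ being those of Theorem \ref{new_HL_coin}. I do not anticipate any genuine obstacle: the argument is essentially a one-line squeeze once Theorem \ref{new_HL_coin} is in hand. The only place demanding care is the bookkeeping behind the base coincidence — verifying that Theorem \ref{BH_HL} at $s=2$, $p=2m$ transports, under the isometry, to summing exponent $2$ and summability exponents $(2m)^\ast$, and that the $s_k$ produced by Theorem \ref{new_HL_coin} indeed match the statement.
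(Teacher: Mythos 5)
Your proposal is correct and is exactly the argument the paper intends (the paper states this corollary without proof, as an immediate combination of the base coincidence $\Pi_{(2;(2m)^{\ast})}^{m}=\mathcal{L}$ from Theorem \ref{BH_HL} with the inclusion of Theorem \ref{new_HL_coin}, plus the trivial reverse containment). Your reading of the exponents as those of Theorem \ref{new_HL_coin} is also the right one; the displayed condition involving $r$ and $\mathbf{q}$ in the corollary's statement is evidently a copy-paste slip in the paper.
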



Bringing Theorem \ref{new_HL_coin} to the context of sequence spaces, the
announced anisotropic result will be achieved. The main result of this section
reads as follows.

\begin{corollary}
\label{new_HL} Let $m$ be a positive integer and $\mathbf{p}\in\lbrack
1,+\infty)^{m}$ such that $\left\vert 1/\mathbf{p}\right\vert <1$ and
$p_{1},\dots,p_{m}\leq2m$. Then, for all continuous $m$-linear forms
$A:\ell_{p_{1}}\times\cdots\times\ell_{p_{m}}\rightarrow\mathbb{K}$
\begin{equation}
\left(  \sum_{j_{1}=1}^{\infty}\left(  \ldots\left(  \sum_{j_{m}=1}^{\infty
}\left\Vert A\left(  e_{j_{1}},\dots,e_{j_{m}}\right)  \right\Vert ^{s_{m}%
}\right)  ^{\frac{s_{m-1}}{s_{m}}}\dots\right)  ^{\frac{s_{1}}{s_{2}}}\right)
^{\frac{1}{s_{1}}}\leq D_{m,p,\mathbf{s}}^{\mathbb{K}}\Vert A\Vert
\label{new_exp}%
\end{equation}
with
\[
s_{k}=\left[  \frac{1}{2}+\frac{m-k+1}{2m}-\left\vert \frac{1}{\mathbf{p}%
}\right\vert _{\geq k}\right]  ^{-1},\quad\text{ for }k=1,\dots,m.
\]

\end{corollary}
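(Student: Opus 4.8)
The plan is to read \eqref{new_exp} as the scalar shadow of the operator-theoretic coincidence produced by Theorem \ref{new_HL_coin}, extracting the series inequality by testing the multiple-summing estimate on the canonical bases of the $\ell_{p_k}$.

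First I would pin down the coincidence that underlies Theorem \ref{new_HL_coin}. Specializing Theorem \ref{BH_HL} to $s_1=\cdots=s_m=2$ and $p_1=\cdots=p_m=2m$ is legitimate, since then $\left|1/\mathbf{p}\right|=1/2$, the vector $(2,\dots,2)$ lies in $[2,2]^m$, and $\left|1/\mathbf{s}\right|=m/2=\tfrac{m+1}{2}-\left|1/\mathbf{p}\right|$ meets the endpoint condition; combined with the standard isometry $\mathcal{L}(\mathcal{X}_{2m},X)\cong\ell_{(2m)^{\ast}}^{w}(X)$ this gives
\[
\Pi_{(2;(2m)^{\ast})}^{m}(X_{1},\dots,X_{m};\mathbb{K})=\mathcal{L}(X_{1},\dots,X_{m};\mathbb{K})
\]
for all Banach spaces $X_1,\dots,X_m$. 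Applying the inclusion of Theorem \ref{new_HL_coin} then yields $\mathcal{L}(X_1,\dots,X_m;\mathbb{K})\subset\Pi_{(\mathbf{s};p_1^\ast,\dots,p_m^\ast)}^{m}(X_1,\dots,X_m;\mathbb{K})$ with $\pi_{(\mathbf{s};p_1^\ast,\dots,p_m^\ast)}(A)\le D\,\|A\|$; the hypotheses $p_k\le 2m$ and $\left|1/\mathbf{p}\right|<1$ are exactly the admissibility conditions ($p_k^\ast\ge (2m)^\ast$, and $\tfrac12-m\bigl(1-\tfrac{1}{2m}\bigr)+m-\left|1/\mathbf{p}\right|=1-\left|1/\mathbf{p}\right|>0$), and after the elementary simplification $-(m-k+1)\bigl(1-\tfrac{1}{2m}\bigr)+(m-k+1)=\tfrac{m-k+1}{2m}$ the resulting exponents are exactly $s_k=\bigl[\tfrac12+\tfrac{m-k+1}{2m}-\left|1/\mathbf{p}\right|_{\ge k}\bigr]^{-1}$.

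The decisive step is the return to series. I would specialize $X_k=\ell_{p_k}$ and feed into the multiple-summing inequality for $A$ the canonical bases $x^{k}=(e_j)_j$ of $\ell_{p_k}$. By the same isometry these satisfy $\|(e_j)_j\|_{w,p_k^\ast}=\|\mathrm{id}_{\ell_{p_k}}\|=1$, so the right-hand side collapses to $D\,\|A\|\prod_{k}\|(e_j)_j\|_{w,p_k^\ast}=D\,\|A\|$, while the left-hand side is precisely the mixed $(s_1,\dots,s_m)$-norm of the matrix $\bigl(A(e_{j_1},\dots,e_{j_m})\bigr)_{\mathbf{j}}$. This is exactly \eqref{new_exp}, with $D_{m,p,\mathbf{s}}^{\mathbb{K}}:=D$.

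I expect no genuine analytic obstacle: all the weight is carried by Theorems \ref{BH_HL} and \ref{new_HL_coin} (and hence by \ref{Thmain}), and what remains is bookkeeping. The points needing care are checking that the admissibility conditions of Theorem \ref{new_HL_coin} reduce to the stated hypotheses, verifying that the weak $p_k^\ast$-norm of the canonical basis equals $1$ so that no spurious constant appears, and confirming the match of the $s_k$. Finally, $\left|1/\mathbf{p}\right|<1$ forces each $p_k>1$, whence each $p_k^\ast<\infty$; this keeps the target indices inside the finite range $[1,+\infty)^m$ required by Theorem \ref{Thmain} and is why the restriction $\mathbf{p}\in[1,+\infty)^m$ is harmless.
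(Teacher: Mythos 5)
Your proposal is correct and follows essentially the same route the paper intends: the base coincidence $\Pi_{(2;(2m)^{\ast})}^{m}=\mathcal{L}$ from Theorem \ref{BH_HL}, the inclusion of Theorem \ref{new_HL_coin} (whose admissibility conditions and exponent algebra you verify exactly as in the paper's proof of that theorem), and then the passage back to series by testing on the canonical bases via the isometry $\mathcal{L}(\mathcal{X}_{p},X)\cong\ell_{p^{\ast}}^{w}(X)$. The paper leaves this last translation implicit (``Bringing Theorem \ref{new_HL_coin} to the context of sequence spaces\dots''), and your write-up supplies precisely the missing bookkeeping, including the observation that $\left\Vert (e_{j})_{j}\right\Vert _{w,p_{k}^{\ast}}=1$ and that $\left\vert 1/\mathbf{p}\right\vert <1$ keeps each $p_{k}^{\ast}$ finite.
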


In order to clarify the new result, we illustrate the simpler case: when
dealing with $m<p_{1}=\cdots=p_{m}=p\leq2m$, we get \eqref{new_exp} with
exponents
\[
s_{k}=\left[  \frac{1}{2}-(m-k+1)\cdot\left(  \frac{1}{p}-\frac{1}{2m}\right)
\right]  ^{-1},\quad\text{ for }k=1,\dots,m,
\]
that is,
\[
s_{1}=\frac{p}{p-m},
\dots,s_{m}=\frac{2mp}{mp+p-2m}.
\]
It is obvious that the above exponents are better than the estimates of
Theorem \ref{78} that provides
\[
s_{1}=\cdots=s_{m}=\frac{p}{p-m}.
\]

The following example is illustrative:

\begin{example}
Suppose $m=3$ and $p=4$. By Theorem \ref{78} we know that \eqref{new_exp}
holds with
\[
s_{k} \geq4 \text{ for } k=1,2,3,
\]
whereas by Corollary \ref{new_HL} we have
\[
s_{1} \geq4, \quad s_{2} \geq3 \quad\text{ and } \quad s_{3} \geq 12/5.
\]

\end{example}

\end{document}